\numberwithin{equation}{section}
\def\ca{{\mathcal A}}
\def\cb{{\mathcal B}}
\def\cc{{\mathcal C}}
\def\ce{{\mathcal E}}
\def\ck{{\mathcal K}}
\def\cs{{\mathcal S}}
\def\ga{{\mathfrak A}}
\def\bc{{\mathbb C}}
\def\bm{{\mathbb M}}
\def\bn{{\mathbb N}}
\def\bt{{\mathbb T}}
\def\bz{{\mathbb Z}}
\def\a{\alpha}
\def\b{\beta}
\def\d{\delta}
\def\l{\lambda} 
\def\m{\mu}
\def\n{\nu}
\def\r{\rho}
\def\s{\sigma} 
\def\f{\varphi}  \def\F{\Phi}
\def\th{\theta} 
\def\om{\omega}
\def\id{\hbox{id}}
\newtheorem{thm}{Theorem}[section]
\newtheorem{lem}[thm]{Lemma}
\newtheorem{cor}[thm]{Corollary}
\newtheorem{prop}[thm]{Proposition}
\newtheorem{rem}[thm]{Remark}
\newtheorem{defin}[thm]{Definition}
\theoremstyle{definition}
\newtheorem{examp}{Example}[section]
\def\aut{\mathop{\rm Aut}}
\def\tr{\mathop{\rm Tr}}
\def\supp{\mathop{\rm supp}}
\def\di{{\rm d}}
\def\idd{{1}\!\!{\rm I}}
\DeclareMathAlphabet{\mathpzc}{OT1}{pzc}{m}{it}
\begin{document}
\title[unique ergodicity]
{invariant conditional expectations and unique ergodicity for anzai skew-products}
\author[S. Del Vecchio]{Simone Del Vecchio}
\address{Simone Del Vecchio\\
Dipartimento di Matematica \\
Universit\`{a} di Roma Tor Vergata\\
Via della Ricerca Scientifica 1, Roma 00133, Italy}\email{{\tt
delvecchio@mat.uniroma2.it}}
\author[F. Fidaleo]{Francesco Fidaleo}
\address{Francesco Fidaleo\\
Dipartimento di Matematica \\
Universit\`{a} di Roma Tor Vergata\\
Via della Ricerca Scientifica 1, Roma 00133, Italy} \email{{\tt
fidaleo@mat.uniroma2.it}}
\author[S. Rossi]{Stefano Rossi}
\address{Stefano Rossi\\
Dipartimento di Matematica \\
Universit\`{a} degli Studi di Bari Aldo Moro \\
Via Edoardo Orabona 4, Bari 70125, Italy} \email{{\tt
stefano.rossi@uniba.it}}

\keywords{Conditional Expectations, Skew-Products, Ergodic Dynamical Systems, Fixed-point Subalgebras, Unique Ergodicity.}
\subjclass[2000]{37A05, 37A55, 37B05, 46L30.}

\begin{abstract}
Anzai skew-products are shown to be uniquely ergodic with respect to the fixed-point subalgebra 
if and only if there is a unique conditional expectation onto such a subalgebra which is invariant
under the dynamics. For the particular case of skew-products, this solves a question raised by B. Abadie and K. Dykema in the wider context of $C^*$-dynamical
systems.

\end{abstract}

\maketitle

\section{introduction}

A $C^*$-dynamical system, {\it i.e.} a pair $(\ga, \Phi)$ given by a unital $C^*$-algebra $\ga$ with
unit $\idd_\ga$  and a unital $*$-automorphism 
$\Phi$ of $\ga$, is said to be uniquely ergodic when there exists exactly one $\Phi$-invariant state $\om$ on $\ga$.
This condition turns out to be equivalent to the seemingly stronger condition that, for every
$a\in\ga$, the Ces\`{a}ro averages
$\frac{1}{N}\sum_{k=0}^{N-1}\Phi^k(a)$ converge to $\om(a)\idd_\ga$ in norm.
As is known, either condition implies that the fixed-point subalgebra $\ga^\Phi:=\{a\in\ga\mid \Phi(a)=a\}$ is trivial, {\it i.e.}
$\ga^\Phi=\bc \idd_\ga$.

It is then natural to turn one's attention to
dynamical systems  $(\ga, \Phi)$ for which the fixed-point subalgebra is allowed  to be nontrivial 
but nevertheless any state on $\ga^\Phi$ only admits a unique
$\Phi$-invariant extension to the whole $\ga$. To our knowledge, systems of this type were first introduced by Abadie and Dykema in \cite{AD}, where they are
referred to as dynamical systems uniquely ergodic with respect to the fixed-point subalgebra, in that they are a broad
generalization of uniquely ergodic systems.
Among other things, in that paper a number of equivalent conditions are given for a system to be uniquely ergodic with
respect to the fixed point subalgebra.
For instance, one is that, for any $a\in\ga$,  the Ces\`{a}ro averages $\frac{1}{N}\sum_{k=0}^{N-1}\Phi^k(a)$ still converge to some necessarily $\Phi$-invariant
element $E(a)$. Note that $E$ defines a conditional expectation of $\ga$ onto $\ga^\F$ that is $\F$-invariant, namely
$E\circ\F=E$. Moreover, $E$ is actually the only such conditional expectation.

It is  not known, however, whether the existence of a unique
$\F$-invariant conditional expectation as above is enough to obtain
unique ergodicity with respect to the fixed-point subalgebra, and this was indeed formulated as a question in \cite{AD}.
The present work settles the problem for so-called skew products, which are a remarkable family of
classical dynamical systems, namely the $C^*$-algebra is commutative with the underlying topological space being a product of the form $X_o\times\bt$, where $X_o$ is any compact Hausdorff space, and the dynamic is assigned
through a homeomorphism $\Phi_{\theta_o, f}$ acting on $X_o\times\bt$ as 
$\Phi_{\theta_o, f}(x, z):=(\theta_o(x), f(x)z)$, where $\theta_o$ is a uniquely ergodic homeomorphism of
$X_o$ and $f:X_o\rightarrow \bt$ is  a continuous function.

We prove that any such  system is uniquely ergodic with respect to the fixed point-subalgebra if and only if
there exists a unique conditional expectation onto the fixed point-subalgebra. To accomplish this goal, we
make use of another characterization of unique ergodicity with respect to the fixed-point algebra for skew products which has been 
proved in our previous work
\cite{DFR}. Indeed,  a skew product $(X_o\times\bt, \Phi_{\theta_o, f})$ is there seen to be  uniquely ergodic with respect to the fixed point algebra if and only if,
for any $n\in\bz$, a function $g: X_o\rightarrow \bc$ satisfying $g(\theta_o(x))f^n(x)=g(x)$ is provided
by a continuous (possibly zero) function. Our strategy, therefore, will  be to show that when non-continuous solutions do exist, it is
always possible to exhibit uncountably many $\Phi_{\theta_o, f}$-invariant conditional expectations.

Quite interestingly, the analysis of the invariant conditional expectations  can  be pushed further, for
we also show that all invariant conditional expectations are dominated by a distinguished expectation as long as
the fixed-point subalgebra is not trivial. Furthermore, this conditional expectation is exactly the only invariant conditional expectation when the skew product
is uniquely ergodic with respect to the fixed-point subalgebra. Ultimately these facts allow us to spell out an extension of Fustenberg's characterization of uniquely ergodic skew-product dynamical systems
 (\cite{Fu}, Lemma 2.1) to the case of uniquely ergodic systems with respect to the fixed point subalgebra in terms of invariant conditional expectations (Theorem \ref{abadyk}).

\section{preliminaries}

A (discrete) $C^*$-dynamical system is a pair $(\ga,\F)$ made of a $C^*$-algebra and positive map $\F:\ga\to\ga$. Suppose that $\ga$ is unital with identity $\idd:=\idd_\ga$, and $\F$  completely positive and unital, that is $\F(\idd)=\idd$. It is said to be {\it topologically ergodic} if $\ga^\F=\bc\idd$ for the fixed-point subspace $\ga^\F:=\{a\in\ga: \F(a)=a\}$.

The set $\cs(\ga)^\F:=\{\f\in\cs(\ga)\mid \f\circ\F=\f\}$ of the invariant states is convex and weak-${}^*$ compact. The extremal invariant states are said to be {\it ergodic}. 
If the set of the invariant states is a singleton, that is $\cs(\ga)^\F=\{\f\}$, the $C^*$-dynamical system $(\ga,\F)$ is said to be {\it uniquely ergodic}. If in addition $\f$ is faithful, it is said to be {\it strictly ergodic}. In the uniquely ergodic case, we have $\ga^\F=\bc\idd$,
$E(\,{\bf\cdot}\,):=\f(\,{\bf\cdot}\,)\idd$ is an invariant completely positive projection onto the fixed-point subspace, and for the Ces\'aro averages,
$$
\lim_n\frac{1}{n}\sum_{k=0}^{n-1}\Phi^k(a)=E(a)\,,\quad a\in\ga\,,
$$
in norm. 

\medskip

The notion of unique ergodicity was recently generalised to the case when the fixed-point subspace is non trivial. The reader is referred to \cite{AD}, Definition 3.3,
for $*$-automorphisms where $\ga^\F$ is a $C^*$-subalgebra, and \cite{FM3}, Definition 2.2, for the more general case of completely positive maps.\footnote{In \cite{FM3}, Theorem 2.1,
it was also shown that the a-priori weaker condition (v), also characterises the unique ergodicity w.r.t. the fixed-point subspace.} For the purpose of the present paper, we adopt the following definition of unique ergodicity w.r.t. the fixed-point subalgebra.
\begin{defin}
\label{ADFM}
A $C^*$-dynamical system $(\ga,\a)$, with $\ga$ unital and $\a\in\aut(\ga)$ a $*$-automorphism, is said to be uniquely ergodic w.r.t. the fixed point subalgebra if the sequence 
$\left(\frac{1}{n}\sum_{k=0}^{n-1}\a^k(a)\right)_n$ converges in norm for each $a\in\ga$.
\end{defin}
With an abuse of notation, we say that the automorphism itself $\a$ is uniquely ergodic if it causes no confusion.

For uniquely ergodic systems as in Definition \ref{ADFM}, the limit of the Ces\'aro averages defines a $\a$-invariant conditional expectation $E:\ga\to\ga^\a$ onto the fixed-point subalgebra given by
$$
E(a):=\lim_n\frac{1}{n}\sum_{k=0}^{n-1}\a^k(a)\,,\quad a\in\ga\,,
$$
which is necessarily unique. 

Therefore, if $\ga^\a=\bc\idd$, the unique ergodicity ({\it i.e.} the convergence in norm of all averages $\left(\frac{1}{n}\sum_{k=0}^{n-1}\a^k(a)\right)_n$) is equivalent to the existence of a unique invariant conditional expectation $E:\ga\to\ga^\a$ which, due to the triviality of the fixed-point subalgebra, leads to $E=\f(\,{\bf\cdot}\,)\idd$, $\f\in\cs(\ga)$ being the unique invariant state.

It is of certain interest to decide when the question raised by B. Abadie and K. Dykema ({\it cf.} Question 3.4) of whether the unique ergodicity is equivalent to the existence of a unique invariant conditional expectations holds true. However, for a wide class of Anzai skew-products, called also {\it processes on the torus} in \cite{Fu}, for which $\ga^\a$ is always either trivial or infinite dimensional, we will show that the existence of a unique invariant conditional expectation onto the fixed-point subalgebra is indeed equivalent to unique ergodicity w.r.t. the fixed-point subalgebra.
\begin{rem}
\label{uefpsb}
For the case $\ga^\a=\bc\idd$, Definition \ref{ADFM} is equivalent to the usual one: $(\ga,\a)$ is uniquely ergodic if, by definition, $\cs(\ga)^\a$ is a singleton, see {\it e.g.} \cite{vW}, Theorem 4.1.8.
\end{rem}

\medskip

From now on, we suppose that $\ga$ is a unital abelian $C^*$-algebra, and $\a$ is a $*$-automorphism. It is well known that any such a $C^*$-dynamical system arises as follows, $\ga\sim C(X)$, $X\sim\s(\ga)$ being a compact Hausdorff space uniquely determined up to topological isomorphisms, and $\a(f):=f\circ\th_o$ for some $\theta_o\in\textrm{Homeo}(X)$, the space of all homeomorphisms of $X$. With a slight abuse of notation, we denote any such a $C^*$-dynamical system as above with $(X,\th_o)$, and call these simply  "a dynamical system". 

One of such dynamical systems is thus uniquely ergodic if, by definition, there is only one regular Borel probability measure $\m$ ({\it i.e.} a positive normalised Radon measure) on $X$ which is invariant under the transposed action 
$\n\to\n\circ\th_o^{-1}$ of $\th_o$ induced on measures $\n$, $\m=\m\circ\th_o^{-1}$. It is strictly ergodic if, in addition, $\supp(\m)=X$.

A triplet $(X,\th_o,\m)$ denotes also a dynamical system, as soon as we want to point out any invariant measure $\m$ as above, in particular when $(X,\th_o)$ is uniquely ergodic and $\m$ is its unique invariant measure.

\medskip

The dynamical systems with which we deal with, called in \cite{Fu} {\it processes on the torus}, are those on the cartesian product $(X_o\times\bt)$, where $X_o$ is a compact Hausdorff space and $\bt$ is the one dimensional torus.   

On $X_o\times\bt$, for each $n>1$ we consider the periodic homeomorphism $\id_{X_o}\!\times\!R_{2\pi\imath/n}$, together with the fixed-point subalgebra 
$$
C(X_o\times\bt)^{\b_n}=\overline{\bigg\{\sum_{l\in F}g_l(x)z^{ln_o}\mid g_l(x)\in C(X_o), F\,\text{finite subset of}\,\, \bz\bigg\}}
$$
w.r.t. the canonical (dual) action 
$\b_n$ on $C(X_o\times\bt)$ associated to such an homeomorphism.

A canonical $\b_n$-invariant conditional expectation onto $C(X_o\times\bt)^{\b_n}$ is uniquely defined by its action on generators
\begin{equation}
\label{perce}
\ce_n\big(h(x)z^k\big):=h(x)z^{ln}\d_{k,ln}\,,\quad k,l\in\bz\,.
\end{equation}
Indeed, since
$$
\ce_n(f)=\frac1{n}\sum_{l=0}^{{n}-1}\b_n^l(f)\,,
$$
we deduce that $\ce_n$ is a faithful conditional expectation of $C(X_o\times\bt)$ onto $C(X_o\times\bt)^{\b_n}$ which is invariant under the action of $\b_n$.\footnote{The case $n=1$ corresponds to the trivial homeomorphism $\id_{(X_o\times\bt)}$ leading to the trivial fixed-point subalgebra $C(X_o\times\bt)$ and trivial conditional expectation 
$\ce_1=\id_{C(X_o\times\bt)}$.}

Our starting point will be a uniquely ergodic dynamical system $(X_o,\th_o,\m_o)$. Corresponding to a given continuous function $f\in C(X_o;\mathbb{T})$, we consider the {\it Anzai skew-product} ({\it cf.} \cite{A})
$\Phi_{\theta_o, f}\in\textrm{Homeo}(X_o\times\mathbb{T})$ defined as
\begin{equation}
\label{askp}
\Phi_{\theta_o, f}(x, z):=\big(\theta_o(x), f(x)z\big)\,,\quad (x, z)\in X_o\times\mathbb{T}\,.
\end{equation}
It is seen in \cite{Fu} that the product measure $\mu:=\mu_o\times m$, where 
$$
m=\frac{\di\th}{2\pi}=\frac{\di z}{2\pi\imath z}\,,\quad z=e^{\imath \th}\in\bt\,,
$$
is the Haar-Lebesgue measure 
of the unit circle $\bt$, is invariant
for the dynamics induced by $\Phi_{\theta_o, f}$ on $(X_o\times \bt)$.

\medskip

Most of the ergodic properties of $(X_o\times\bt, \F_{\theta_o, f})$
can  be read through the kind of the solutions of the so-called cohomological equations, one for each $n\in\bz$.\footnote{The case $n=0$ corresponds always to the trivial solution $f(x)=1$, $\m_o$ a.e., up to a multiplicative constant.}

More precisely, for $g\in L^\infty(X_o, \mu_o)$ consider the multiplication operator $M_g\in\cb\big(L^2(X_o, \mu_o)\big)$ given by
$$
(M_g\xi)(x):=g(x)\xi(x)\,,\quad \xi\in L^2(X_o, \mu_o)\,.
$$
We also have a (cyclic, with cyclic vector $\xi(x):=1$, $\m_o$ a.e.) representation $\pi_{\m_o}$ of $C(X_o)$ by multiplication operators, given for $G\in C(X_o)$ as 
$$
(\pi_{\m_o}(G)\xi)(x):=G(x)\xi(x)\,,\quad \xi\in L^2(X_o, \mu_o)\,.
$$

Corresponding to a skew-product $\F_{\theta_o, f}$, for each $n\in\bz$ we consider the {\it cohomological equations}
\begin{equation}
\label{sifo}
g(\theta_o(x))f(x)^n=g(x)\,,\,\,\mu_o\,\text{-}\,\textrm{a.e.}\,,
\end{equation}
in the unknown complex function $g\in L^\infty(X_o, \mu_o)$. We also consider the twin equation
\begin{equation}
\label{sifo1}
g(\theta_o(x))f(x)^n=g(x)\,,
\end{equation}
where the unknown is now a function $g\in C(X_o)$. Obviously if $G$ satisfies \eqref{sifo1}, $\pi_{\m_o}(G)$ satisfies \eqref{sifo}.

For a fixed $n$, it is therefore natural to say that solutions $g=\pi_{\m_o}(G)$ with $G$ satisfying \eqref{sifo1} are said to be {\it continuous}, whereas the remaining one are named
{\it measurable non-continuous}.

By unique ergodicity of $\theta_o$, the equation only has constant solutions for $n=0$. Also note that, for every $n\in\bz$,
the function that is zero is is a solution of \eqref{sifo1}, whereas that which is zero $\mu_o$-a.e. is a solution of \eqref{sifo}. We
we shall refer to those as the trivial solutions of the cohomological equation.

Throughout the paper, if a nontrivial solution of \eqref{sifo} is continuous 
(up to being re-defined on a $\mu_o$-negligible set) and satisfies
\eqref{sifo1}, we will simply say that the cohomological equations have non-trivial
continuous solutions.
Note also that, if $g$ is a solution of \eqref{sifo} at the level $n$, then the two-variable function
$h(x, z):= g(x)z^n$ is a continuous $\F_{\theta_o, f}$-invariant function if and only if $g$ is continuous and
satisfies \eqref{sifo1}.\footnote{To be more precise, whenever $g$ is a solution of \eqref{sifo} at the level $n$, $h(x, z):= g(x)z^n$ is $\m_o\times m$-equivalent to a 
$\F_{\theta_o, f}$-invariant continuous function if and only if $g=\pi_{\m_o}(G)$, and $G$ satisfies the twin equation \eqref{sifo1}.}

We remark that, due to ergodicity of $(X_o, \theta_o, \mu_0)$, the solution of \eqref{sifo} for a fixed
$n\in\bz$ is unique up to a multiplicative scalar. This was seen in \cite{DFR}, Proposition 8.2,  by adapting the proof
of \cite{F20}, Proposition 2.2, to the present situation. Moreover, there is no loss of generality if those solutions are multiple of a single function with absolute value $1$, almost everywhere w.r.t. $\mu_o$, see \cite{DFGR}, Remark 4.2.

\medskip

In \cite{Fu}, it was proved that the system $(X_o\times\bt, \F_{\theta_o, f}, \mu)$ is ergodic if and only if, for every $n\neq 0$, 
\eqref{sifo} only have the trivial solution. Remarkably, the system $(X_o\times\bt, \Phi_{\theta_o, f})$ is uniquely ergodic
if and only if $(X_o\times\bt, \Phi_{\theta_o, f}, \mu)$ is ergodic, provided that $(X_o, \theta_o)$ is uniquely ergodic
with $\mu_o$ the unique invariant measure.
In addition, topological ergodicity of
$(X_o\times\bt, \F_{\theta_o, f})$, that is $h\in C(X_o\times\bt)$ with $h\circ\F_{\theta_o, f}=h$ implies that $f$ is constant,
is equivalent to the weaker request that continuous solutions of the cohomological equations are null for each $n\neq0$.

\medskip

The analysis in \cite{Fu} for processes on the torus leaves open the case when the fixed-point subalgebra is non trivial, that is unique ergodicity w.r.t. the fixed-point subalgebra, which has been recently addressed in \cite{DFR}. Indeed, in Theorem 10.7 of this paper it has been proved to
amount to the condition that, for any $n\in\bz$, any solution of \eqref{sifo}
is automatically continuous, that is its class of $\mu_o$-equivalence contains a continuous function 
satisfying \eqref{sifo1}.

The following simple result helps to further clarify the relation between solutions of cohomological equations \eqref{sifo} and \eqref{sifo1}.

\begin{prop}
If $g\in C(X)$ satisfies \eqref{sifo} for $n\in\bz$, then $g(\theta_o(x))f(x)^n=g(x)$ for $x\in\supp(\m_o)$, and therefore $g$ satisfies automatically \eqref{sifo1} if $(X,\th_o)$ is strictly ergodic.
\end{prop}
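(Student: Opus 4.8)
The plan is to exploit the elementary fact that two continuous functions which agree $\m_o$-almost everywhere must in fact coincide on the closed support $\supp(\m_o)$, and then to specialise to the strictly ergodic case where $\supp(\m_o)=X$.

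First I would observe that both maps $x\mapsto g(\theta_o(x))f(x)^n$ and $x\mapsto g(x)$ are continuous on $X$: the first because $g$, $\theta_o$ and $f$ are continuous and the $\bt$-valued $n$-th power ($n\in\bz$) is continuous, the second trivially. By hypothesis \eqref{sifo} these two continuous functions agree on a Borel set $N^c$ with $\m_o(N)=0$.

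Next I would recall the standard property of the support of a regular Borel probability measure: every non-empty relatively open subset of $\supp(\m_o)$ has strictly positive $\m_o$-measure, since by definition $\supp(\m_o)$ is the complement of the union of all open $\m_o$-null sets. Consequently the full-measure set $N^c$ meets every non-empty relatively open subset of $\supp(\m_o)$, so that $N^c\cap\supp(\m_o)$ is dense in $\supp(\m_o)$. Since the set $\{x\in X:\ g(\theta_o(x))f(x)^n=g(x)\}$ is closed (being the preimage of $\{0\}$ under a continuous function) and contains the dense subset $N^c\cap\supp(\m_o)$ of the closed set $\supp(\m_o)$, it must contain all of $\supp(\m_o)$, which is the first assertion. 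Finally, if $(X,\th_o)$ is strictly ergodic then $\supp(\m_o)=X$ by definition, hence $g(\theta_o(x))f(x)^n=g(x)$ holds for every $x\in X$, that is, $g$ satisfies \eqref{sifo1}.

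I do not foresee any genuine obstacle here: the only point requiring a little care is the density in $\supp(\m_o)$ of a set of full $\m_o$-measure, combined with the closedness of the agreement set of two continuous functions; everything else is immediate from continuity and from the definition of strict ergodicity.
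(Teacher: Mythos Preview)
Your argument is correct and follows essentially the same route as the paper: both hinge on the density of a full-measure set in $\supp(\m_o)$ together with continuity of the two sides. The only cosmetic difference is that you phrase the passage to the limit via closedness of the coincidence set, whereas the paper writes it out explicitly with a convergent net.
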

\begin{proof}
Suppose that If $g\in C(X)$ satisfies \eqref{sifo} and choose any Borel set $A\subset X$ of full measure and, necessarily, $A\bigcap\supp(\m_o)$ is dense in $\supp(\m_o)$. For each $x\in\supp(\m_o)$, choose a net $(x_\iota)_\iota\subset A\bigcap\supp(\m_o)$ converging to $x$. We get
\begin{align*}
g(x)=&g\big(\lim_\iota x_\iota\big)=\lim_\iota g(x_\iota)=\lim_\iota\big(g(\theta_o(x_\iota))f(x_\iota)^n\big)\\
=&g\big(\theta_o(\lim_\iota x_\iota)\big)f(\lim_\iota x_\iota)^n=g(\theta_o(x))f(x)^n\,.
\end{align*}
\end{proof}

\begin{rem}
If $g\in C(X)$ satisfies \eqref{sifo} for $n\in\bz$, then $g(\theta_o(x))f(x)^n=g(x)$ for $x\in\supp(\m_o)$, and therefore $g$ satisfies automatically \eqref{sifo1} if $(X,\th_o)$ is strictly ergodic.
\end{rem}
\begin{proof}
Indeed, suppose that If $g\in C(X)$ satisfies \eqref{sifo} and choose any Borel set $A\subset X$ of full measure and, necessarily, $A\bigcap\supp(\m_o)$ is dense in $\supp(\m_o)$. For each $x\in\supp(\m_o)$, choose a net $(x_\iota)_\iota\subset A\bigcap\supp(\m_o)$ converging to $x$. We get
\begin{align*}
g(x)=&g\big(\lim_\iota x_\iota\big)=\lim_\iota g(x_\iota)=\lim_\iota\big(g(\theta_o(x_\iota))f(x_\iota)^n\big)\\
=&g\big(\theta_o(\lim_\iota x_\iota)\big)f(\lim_\iota x_\iota)^n=g(\theta_o(x))f(x)^n\,.
\end{align*}
\end{proof}

We end the section by remarking that all sums arising from the Fourier analysis on the unit circle $\bt$ are understood to converge in the sense of Ces\'aro w.r.t. a fixed topology, usually that generated by the norm if is not differently specified, see {\it e.g.} \cite{DFR},

\section{on invariant conditional expectations}

We start with some results which are useful in the sequel. The first one provides a parametric generalisation of the Fej\'er-Riesz Theorem which has a self-containing interest.
\begin{prop}
\label{wcrf}
Let $X_o$ be a topological  space together with a strictly positive trigonometric polynomial 
$$
p_K(x,z):=\sum_{|k|\leq K} b_k(x)z^k,\quad (x, z)\in X_o\times\bt
$$
({\it i.e.} $p_K(x,z)> 0$ for every $(x, z)\in X_o\times\bt$), where the coefficients $b_k$, $k=-K, \ldots, K$, are complex-valued (bounded) Borel functions on $X_o$. 

Then there exists a trigonometric polynomial $g_K(x,z)=\sum_{k=0}^K a_k(x)z^k$, where the coefficients $a_k$, $k=0, \ldots, K$ are  
(bounded)  Borel functions, such that $p_K=\overline{g_K}g_K$.
\end{prop}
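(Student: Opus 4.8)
\noindent\emph{Proof strategy.}
The plan is to run the classical proof of the Fej\'er--Riesz theorem fibrewise in $x$ --- factoring an associated polynomial into its zeros and collecting the ones inside the unit disc --- while checking that the construction can be made Borel in $x$. The only genuinely delicate point is this measurability: there is no Borel choice of the individual zeros as functions of $x$, but the \emph{symmetric} functions of the relevant zeros, which is all that enters $g_K$, can be written through contour integrals, hence are Borel.

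First I would record that positivity forces $b_{-k}(x)=\overline{b_k(x)}$ for all $k$ and all $x$ (compare $p_K(x,\cdot)$ with its complex conjugate on $\bt$), and, for each $x$, form the ordinary polynomial
\begin{equation*}
Q_x(w):=w^{K}\sum_{|k|\leq K}b_k(x)w^k=\sum_{j=0}^{2K}b_{j-K}(x)w^j ,
\end{equation*}
of degree at most $2K$. The coefficient symmetry makes $Q_x$ self-inversive, $Q_x(w)=w^{2K}\overline{Q_x(1/\bar w)}$, so its zeros are invariant under $w\mapsto 1/\bar w$; and $|Q_x(z)|=p_K(x,z)>0$ for $z\in\bt$ shows $Q_x$ has no zero on $\bt$. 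Splitting off the zero at the origin (of order $m$ exactly when the top coefficients $b_K(x),\dots,b_{K-m+1}(x)$ vanish while $b_{K-m}(x)\neq 0$) and using the symmetry, one sees that, counted with multiplicity, $Q_x$ has exactly $K$ zeros in the open unit disc, for every $x$.

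Next I would extract these interior zeros symmetrically. For $j=0,1,\dots,K$ put
\begin{equation*}
s_j(x):=\frac1{2\pi\imath}\oint_{\bt} w^{j}\,\frac{Q_x'(w)}{Q_x(w)}\,\di w ,
\end{equation*}
which by the residue theorem is the $j$-th power sum of the interior zeros of $Q_x$ (in particular $s_0(x)=K$, and $|s_j(x)|\leq K$ since there are $K$ zeros, each of modulus $<1$). Since $Q_x$ and $Q_x'$ depend polynomially on the coefficients $b_k(x)$, each Borel in $x$, and $Q_x$ never vanishes on $\bt$, the function $s_j$ is Borel in $x$ (a continuous function of the coefficient tuple on the open set of tuples giving a polynomial without zeros on $\bt$, precomposed with the Borel map $x\mapsto(b_k(x))_k$), and it is bounded. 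Newton's identities then express the elementary symmetric functions $e_1(x),\dots,e_K(x)$ of the interior-zero multiset as polynomials in $s_1(x),\dots,s_K(x)$, hence as Borel (bounded) functions of $x$, so that the monic polynomial
\begin{equation*}
\tilde g_x(w):=w^{K}+\sum_{j=1}^{K}(-1)^{j}e_j(x)\,w^{K-j}
\end{equation*}
has Borel bounded coefficients and has exactly the interior zeros of $Q_x$ as its roots.

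Finally I would run the classical Fej\'er--Riesz computation fibrewise: each exterior zero of $Q_x$ is of the form $1/\bar\rho$ with $\rho$ a nonzero interior zero, and using $z-1/\bar\rho=-\tfrac{z}{\bar\rho}\,\overline{z-\rho}$ on $\bt$ one gets $p_K(x,z)=z^{-K}Q_x(z)=\lambda(x)\,|\tilde g_x(z)|^2$ on $\bt$ for a constant $\lambda(x)$, which is necessarily positive and, since $\tilde g_x$ has no zero on $\bt$, equals $\lambda(x)=p_K(x,1)/|\tilde g_x(1)|^2$, a Borel function of $x$. Setting $g_K(x,z):=\sqrt{\lambda(x)}\,\tilde g_x(z)=:\sum_{k=0}^{K}a_k(x)z^k$, the coefficients $a_k$ are Borel in $x$; they are bounded, since $|a_k(x)|\leq\sup_{z\in\bt}|g_K(x,z)|=\sup_{z\in\bt}p_K(x,z)^{1/2}\leq\big(\sum_{|j|\leq K}\|b_j\|_\infty\big)^{1/2}$; and $\overline{g_K}\,g_K=\lambda(x)\,|\tilde g_x|^2=p_K$ on $X_o\times\bt$, which is the assertion. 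As indicated, the one nontrivial step is securing the measurable dependence of the factorisation on $x$, which is handled by never isolating a single zero and instead recovering $\tilde g_x$, and then $\lambda$, from the contour integrals $s_j$ via Newton's identities; an alternative is to take $g_K(x,\cdot)$ to be the outer spectral factor $\exp\big(\tfrac12 c_0(x)+\sum_{k\geq1}c_k(x)z^k\big)$, where the $c_k(x)$ are the Fourier coefficients of $\log p_K(x,\cdot)$, and to invoke uniqueness of outer functions (via the maximum modulus principle) to see fibrewise that it is a polynomial of degree $\leq K$.
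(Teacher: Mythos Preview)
Your proof is correct, and it takes a genuinely different route from the paper's.

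Both arguments run the classical Fej\'er--Riesz factorisation fibrewise in $x$ and then verify that the output depends Borel-measurably on $x$, and both finish the boundedness claim the same way, via the Fourier-coefficient estimate $|a_k(x)|\leq\sup_{z}|g_K(x,z)|=\sup_z p_K(x,z)^{1/2}$. The difference is entirely in how measurability is obtained. The paper invokes the theorem of Harris--Martin that the unordered root set of a polynomial varies continuously with the coefficients, composes this with the continuous projection that picks out the $K$ roots of modulus $>1$, and thereby gets Borel dependence of the Fej\'er--Riesz coefficients; since that continuity result requires the leading coefficient to be nonzero, the paper must then stratify $X_o$ into the Borel sets $M_{K-l}$ on which the top nonvanishing coefficient is $b_{K-l}$, and glue. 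Your approach sidesteps both the external citation and the stratification: by reading off the power sums $s_j(x)$ of the \emph{interior} zeros from the contour integrals $\oint_{\bt} w^j Q_x'(w)/Q_x(w)\,\di w$, you get quantities that are manifestly continuous in the coefficient tuple on the whole open set of tuples with no zero on $\bt$, regardless of the degree of $Q_x$; Newton's identities then deliver the coefficients of $\tilde g_x$ uniformly in $x$. The price is that you work with the interior factorisation rather than the exterior one the paper uses, but of course either yields a valid Fej\'er--Riesz square root. Your argument is more self-contained and avoids case analysis; the paper's is shorter once the root-continuity theorem is granted.
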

\begin{proof}
We start by recalling that the Fej\'er-Riesz Theorem (see {\it e.g.} \cite{P}, Lemma 2.5) gives an explicit formula for the square root of a strictly positive trigonometric polynomial $q_K(z)=\sum_{|k|\leq K}b_k z^k$ with complex coefficients in terms of the roots of the polynomial $z^K q_K(z)$. 

More precisely, let $q_K(z)=\sum_{|k|\leq K}b_k z^k$ with $b_K\neq 0$, be positive for $z\in\bt$, and consider the $K$-roots $z_1,\cdots, z_K$ (counted with their multiplicity) of the polynomial $z^K q_K(z)$ which lie in the complement of the unit disk $\{z\in\mathbb{C}: |z|>1\}$.
Then the trigonometric polynomial 
\begin{equation}
\label{fejer}
g_K(z)=\left|\frac{b_K}{z_1\cdots z_K}\right|^{1/2}\prod_{i=1}^K (z-z_i)
\end{equation}
satisfies the required property: $|g_K(z)|^2=q_K(z)$, $z\in \bt$.
We now start by handling the case where $b_K(x)\neq 0$ for every $x\in X_o$.

Define $C:=\{(w_0, w_1, \ldots, w_{2K})\in\bc^{2K+1}\mid w_{2K}=0\}\subset \bc^{2K+1}$.
Consider the map 
$$
\Sigma: \mathbb{C}^{2K+1}\setminus C \rightarrow \mathbb{C}^{2K}/S^{2K}
$$
that, to the $2K+1$-tuple $(w_0, w_1, \ldots, w_{2K})$, associates the set of the $2K$ roots
of the polynomial $p(z)=\sum_{j=0}^{2K}w_jz^j$ considered as an element of the quotient
of $\bc^{2K}$ by the natural action of the symmetric group $S^{2K}$.
By \cite{HM}, Theorem A, the map $\sigma$ is  continuous.

Given $q_K(z)=\sum_{|k|\leq K}b_kz^k$, 
we denote by $w_j$, $j=0, 1, \ldots, 2K$  the coefficients of $z^Kq_K(z)$, that is
$\sum_{j=0}^{2K}w_jz^j:=z^Kq_K(z)$.
We note that
$$
D:=\bigg\{(w_0, w_1, \ldots, w_{2K})\in \bc^{2K+1}\setminus C\,\Big| \sum_{j=0}^{2K} w_j z^{j-K}>0,\,\,\, z\in\bt \bigg\}.
$$ 
is a Borel subset of $\bc^{2K+1}$. 
Indeed, $D=(C^{2K+1}\setminus C)\cap D_1\cap D_2$, 
where 
$$
D_1:=\bigg\{(w_0, w_1, \ldots, w_{2K})\in \bc^{2K+1}\,\Big|\,{\rm Re}\sum_{j=0}^{2K} w_j z^{j-K}>0,\,\,\, z\in\bt \bigg\}
$$
is open, 
and 
$$
D_2:=\bigg\{(w_0, w_1, \ldots, w_{2K})\in \bc^{2K+1}\,\Big|\,{\rm Im}\sum_{j=0}^{2K} w_j z^{j-K}=0,\,\,\, z\in\bt \bigg\}
$$
is closed.

Consider the restriction of $\Sigma$ to $D$, and note that 
$\Sigma(D)$ is contained in the set of those non ordered $2K$-tuples such that $K$ entries have
absolute value strictly greater than $1$ and $K$ entries have absolute value strictly less than $1$.

Define $\pi:=P\circ\Sigma$ from $D$ to $\bc^K/S^K$, where $P:\Sigma(D)\rightarrow \bc^K/S^K$ is the map that selects the $K$ entries 
whose absolute value is greater than $1$. The map $\pi$ is continuous on $D$ as it is the composition of continuous 
maps. 

Now, from \eqref{fejer} one sees  that, under our hypotheses, the coefficients
$a_k$, $k=0, \ldots, K$, are Borel functions on $X_o$. Indeed, these are obtained as  continuous symmetric 
functions of the roots
$z_1, z_2, \ldots, z_K$, which are in turn
measurable functions on $X_o$ since they are the composition of the Borel measurable map
$$
X_o\ni x\rightarrow (b_{-K}(x), \ldots, b_K(x))\in D^{2K+1}
$$
with the continuous map $\pi$.

The general case can be dealt with by defining recursively for $l=0, 1, \ldots, K-1$,
$$
M_{K-l}:=\{x\in X_o\mid b_j(x)=0,\, j=K, \ldots, K-l+1, \, {\rm and}\, c_{K-l}(x)\neq 0\}\,,
$$
which are Borel subsets of $X_o$.

The proof ends by employing the same technique as above on the subsets $M_{k-l}$, $l=0,1, \ldots, K-1$ to obtain the coefficients
of $g_K$, by gluing  finitely many Borel functions.

\bigskip

Concerning the boundedness of the coefficients $a_l$, we easily have
$$
\big|g_K(x,z)\big|^2=p_K(x,z)\leq\sup_{(x,z)\in X_o\times\bt} p_K(x,z)\,,\quad (x,z)\in X_o\times\bt\,.
$$
Therefore, for each $x\in X_o$,
$$
|a_l(x)|=\left|\oint g_K(x,z)z^{-l}\frac{\di z}{2\pi\imath z}\right|\leq\sqrt{\sup_{(x,z)\in X_o\times\bt} p_K(x,z)}\,,
$$
which leads to
$|g_K(x,z)\big|\leq (K+1)\sqrt{\sup_{(x,z)\in X_o\times\bt} p_K(x,z)}$\,.
\end{proof}
\begin{prop}
\label{iso0}
Suppose $U\in\ga$ is a unitary in the unital $C^*$-algebra $\ga$ such that, if $(\l_k)_{k\in\bz}\subset\ell^1(\bz)$ and 
$$
\sum_{k\in\bz}\l_k U^k=0\Rightarrow \l_k=0\,,\,\,\,k\in\bz\,.
$$ 
Then $C^*(U,\idd_\ga)\sim C(\bt)$ through the $*$-isomorphism that sends $U^n$ to the character $\chi_n(z):=z^n$, $n\in\bz$.
\end{prop}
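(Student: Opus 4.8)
The plan is to recognise $C^*(U,\idd_\ga)$ through the Gelfand transform and then to prove that the spectrum of $U$ fills the whole unit circle.

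First, since $U$ is unitary the $C^*$-subalgebra $\cb:=C^*(U,\idd_\ga)$ is commutative and unital, so the Gelfand--Naimark theorem furnishes an isometric $*$-isomorphism $\Gamma:\cb\to C(\sigma(U))$, with $\sigma(U)\subseteq\bt$ the spectrum of $U$; under the usual identification of the character space of $\cb$ with $\sigma(U)$, $\Gamma$ sends $U$ to $\chi_1|_{\sigma(U)}$ and hence $U^n$ to $\chi_n|_{\sigma(U)}$ for every $n\in\bz$. Thus everything reduces to showing $\sigma(U)=\bt$, for then $\Gamma:\cb\to C(\bt)$ is exactly the $*$-isomorphism claimed.

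Next I would translate the hypothesis into a statement about $\sigma(U)$. For $(\lambda_k)_{k\in\bz}\in\ell^1(\bz)$ the series $\sum_k\lambda_kU^k$ converges in norm in $\ga$ (as $\|U^k\|=1$), and applying the isometric $*$-homomorphism $\Gamma$ term by term gives $\Gamma\big(\sum_k\lambda_kU^k\big)=\hat\lambda|_{\sigma(U)}$, where $\hat\lambda(z):=\sum_k\lambda_kz^k$ is the associated element of the Wiener algebra $A(\bt)$ of absolutely convergent Fourier series. Hence $\sum_k\lambda_kU^k=0$ precisely when $\hat\lambda$ vanishes identically on $\sigma(U)$, and the standing hypothesis says exactly that this forces $\lambda=0$.

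Finally, to see $\sigma(U)=\bt$ I would argue by contradiction: if $\sigma(U)$ were a proper closed subset of $\bt$, its complement would contain a nonempty open arc $I$, and one can choose $h\in C^\infty(\bt)$ with $h\not\equiv0$ and $\supp(h)\subseteq I$; its Fourier coefficients form a nonzero sequence in $\ell^1(\bz)$ (smoothness, or even just the $C^2$ hypothesis, gives rapid decay), while the corresponding Wiener function $\hat\lambda=h$ vanishes on $\sigma(U)\subseteq\bt\setminus I$, so by the previous step $\sum_k\lambda_kU^k=0$ with $\lambda\neq0$, a contradiction. The step I expect to need the most care is the bookkeeping in the middle paragraph — the equivalence $\sum_k\lambda_kU^k=0\Leftrightarrow\hat\lambda|_{\sigma(U)}=0$ — together with the elementary fact invoked at the end that every proper closed subset of $\bt$ is the zero set of a nonzero element of $A(\bt)$; neither is a genuine obstacle, so the proof is essentially soft once the reduction to $\sigma(U)=\bt$ is made.
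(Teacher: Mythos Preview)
Your proof is correct. The reduction to $\sigma(U)=\bt$ via Gelfand theory is clean, the translation of the hypothesis into ``$\hat\lambda|_{\sigma(U)}=0\Rightarrow\lambda=0$'' is exactly right, and the bump-function construction (smooth, hence $\ell^1$ Fourier coefficients) disposes of any proper closed $\sigma(U)$.

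The paper argues differently: it observes that the hypothesis makes the map $\ell^1(\bz)\ni(\lambda_k)\mapsto\sum_k\lambda_kU^k$ an isometric $*$-isomorphism of Banach $*$-algebras onto its image $\ga_o$, and then invokes the fact that $\ell^1(\bz)$ has a \emph{unique} $C^*$-completion, namely $C(\bt)$; since $C^*(U,\idd_\ga)$ is one such completion, it must be $C(\bt)$. Your route is more hands-on and entirely self-contained, while the paper's is shorter but appeals to a black box --- and in fact that black box, unpacked, is precisely the statement you prove directly: the only closed $K\subseteq\bt$ on which restriction of Wiener functions is injective is $K=\bt$. So the two arguments are close cousins, with yours supplying the content the paper leaves implicit.
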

\begin{proof}
The same argument employed in the proof of Proposition 10.8 in \cite{DFR}, which we report for the convenience of the reader.

Put
$$
\ga_o:=\bigg\{\sum_{m\in\mathbb{Z}} c_lU^l\mid\sum_{m\in\mathbb{Z}} |c_l|<\infty\bigg\}\,,
$$
endowed with the $\ell_1$-type norm
$$
\bigg[\!\bigg]\sum_{l\in\mathbb{Z}}c_lU^l\bigg[\!\bigg]:=\sum_{z\in\mathbb{Z}}|c_l|\,,
$$
and observe that any element in $\ga_o$ provides a well-defined element of $\ga$ because the above sums defining the elements of $\ga_o$ are absolutely convergent in $\ga$. 

In addition, $\ga_o$ is seen at once to be isometrically isomorphic with the Banach algebra 
$\ell_1(\mathbb{Z})$ understood
as the convolution algebra of $\mathbb{Z}$. 

Since the latter has only one $C^*$-completion, that is $C(\mathbb{T})$, we end the proof.
\end{proof}
\begin{examp}\label{exmat}
For the integer $k\geq1$, consider the unitary in $M_k(C(\bt))$ given by
\begin{equation*}
U_k=
\begin{pmatrix}
0 & 0 & \cdots & z \\
1 & 0 & \cdots & 0 \\
\vdots  & \vdots  & \ddots & \vdots  \\
0 & \cdots & 1 & 0
\end{pmatrix}\,.
\end{equation*}
Then $U_k^k$ is the diagonal matrix $zI_k=z\idd_{\bm_k(\bc)}$. 

It easy to check that Proposition \ref{iso} tells us that 
$$
C(\bt)\ni z^l\mapsto U_k^l\in\bm_k\big(C(\bt)\big)
$$ 
realises a $*$-monomorphism denoted by $\pi_k$.\footnote{The case $k=1$ corresponds to the trivial case when $M_k(C(\bt))=C(\bt)$ and $\pi_k=\id$.}
\end{examp}
For the skew-product in \eqref{askp}, we recall some properties of the solutions of the cohomological equations introduced above.
\begin{prop}
\label{nzmz}
The elements of $\bz$ for which \eqref{sifo} (resp. \eqref{sifo1}) admits nontrivial solutions provide a subgroup, and therefore there is an integer $n_{o}\geq0$ (resp. $m_{0}\geq0$) for which such a subgroup is given by $\{ln_{o}\mid l\in\bz\}$  (resp. $\{lm_{o}\mid l\in\bz\}$). Since for a fixed $n\in\bz$, if $g\in C(X)$ satisfies \eqref{sifo1}, it satisfies \eqref{sifo}, there is an integer $k_{o}\geq0$ such that $m_o=k_on_o$.

In addition, choosing a unitary $u_{n_{o}}$ satisfying \eqref{sifo} for $n=n_o$ (resp. satisfying \eqref{sifo1} for $n=m_o$), all solutions of \eqref{sifo} for $n=ln_o$ (resp. \eqref{sifo1}) $n=lm_o$) are a multiple of the powers of $u_{n_{o}}^{l}$ (resp. $u_{m_o}^l$). 
\end{prop}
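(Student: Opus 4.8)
The plan is to treat the two cohomological equations in parallel. Write $\cg\subseteq\bz$ for the set of those $n$ for which \eqref{sifo} has a non-trivial solution, and $\ch\subseteq\bz$ for the analogous set attached to \eqref{sifo1}; the first claim amounts to showing that $\cg$ and $\ch$ are subgroups of $(\bz,+)$. For $\cg$ I would argue: $0\in\cg$, since the (non-zero) constants satisfy \eqref{sifo} for $n=0$; $\cg$ is stable under $n\mapsto -n$, since, taking complex conjugates in \eqref{sifo} and using $\overline{f}=f^{-1}$ (because $f$ is $\bt$-valued), one sees that $\overline{g}$ satisfies \eqref{sifo} for $-n$ whenever $g$ does for $n$; and $\cg$ is stable under addition, since multiplying the identities for $g_1$ (at level $n_1$) and $g_2$ (at level $n_2$) gives $(g_1 g_2)(\theta_o(x))f(x)^{n_1+n_2}=(g_1g_2)(x)$. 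The only point requiring care is that $\overline{g}$ and $g_1g_2$ are still \emph{non-trivial}, and here I would invoke \cite{DFGR}, Remark~4.2: each non-trivial solution of \eqref{sifo} can be normalised so as to have modulus $1$ $\mu_o$-a.e., a property evidently preserved by conjugation and products. Hence $\cg=n_o\bz$ for a unique $n_o\geq0$.

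For $\ch$ the same three steps work once one knows that a non-trivial continuous solution $G$ of \eqref{sifo1} may be taken to be a unitary of $C(X_o)$. Indeed $|G|\in C(X_o)$ and, since $|f|\equiv1$, the pointwise identity \eqref{sifo1} gives $|G|\circ\theta_o=|G|$; unique ergodicity of $(X_o,\theta_o)$, whence topological ergodicity, forces $|G|$ to be constant, necessarily a strictly positive constant as $G$ is non-trivial, so one may rescale to $|G|\equiv1$. This yields $\ch=m_o\bz$ for a unique $m_o\geq0$. Since a continuous solution of \eqref{sifo1} is, in particular, an $L^\infty$-solution of \eqref{sifo}, one has $\ch\subseteq\cg$, i.e.\ $m_o\bz\subseteq n_o\bz$, and therefore $n_o\mid m_o$; writing $m_o=k_o n_o$ (with $k_o\geq0$, interpreting $k_o=0$ in the degenerate case $n_o=0$, which also forces $m_o=0$) gives the asserted relation.

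For the last assertion, fix a unitary $u_{n_o}\in L^\infty(X_o,\mu_o)$ satisfying \eqref{sifo} for $n=n_o$ (take $u_{n_o}:=\idd$ if $n_o=0$, its powers being the constants). Raising the identity $u_{n_o}(\theta_o(x))f(x)^{n_o}=u_{n_o}(x)$ to the $l$-th power shows that $u_{n_o}^l$ satisfies \eqref{sifo} for $n=l n_o$, for every $l\in\bz$. Now any non-trivial solution of \eqref{sifo} occurs at some level $n\in\cg=n_o\bz$, say $n=ln_o$; by \cite{DFR}, Proposition~8.2, the solution of \eqref{sifo} for a fixed $n$ is unique up to a multiplicative scalar, so it must be a scalar multiple of $u_{n_o}^l$. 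The argument for \eqref{sifo1} is identical, with a unitary $u_{m_o}\in C(X_o)$ satisfying \eqref{sifo1} for $n=m_o$, save for one extra step: a continuous solution $G$ of \eqref{sifo1} at level $lm_o$ equals $\lambda u_{m_o}^l$ only $\mu_o$-a.e.\ at first, but $G-\lambda u_{m_o}^l$ is again a continuous solution of \eqref{sifo1} at that level, hence $|G-\lambda u_{m_o}^l|$ is continuous and $\theta_o$-invariant, hence constant; as it vanishes $\mu_o$-a.e., it vanishes identically, giving $G=\lambda u_{m_o}^l$ everywhere.

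I expect the only genuinely delicate point to be this last passage from $\mu_o$-a.e.\ equality to everywhere equality in the continuous case; the remainder is routine once \cite{DFR}, Proposition~8.2 (uniqueness up to a scalar), and \cite{DFGR}, Remark~4.2 (modulus-one normalisation), are in hand. One should also keep track of the degenerate possibilities $n_o=0$ and $m_o=0$, where the statement simply records that the only solutions are the (non-zero) constants.
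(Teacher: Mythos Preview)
Your argument is correct, and there is essentially nothing to compare it against: the paper's own proof consists of the single sentence ``The proof is analogous to that of \cite{DFR}, Proposition 10.2,'' so you have supplied precisely the details that the paper outsources. The subgroup verification via normalisation to unimodular solutions, the inclusion $\ch\subseteq\cg$ (hence $n_o\mid m_o$), and the appeal to the scalar-uniqueness result \cite{DFR}, Proposition~8.2, are all the expected moves and match what that reference does.

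One small remark: your passage from $\mu_o$-a.e.\ equality to pointwise equality in the continuous case, via the observation that $|G-\lambda u_{m_o}^l|$ is a continuous $\theta_o$-invariant function and hence constant by unique ergodicity, is slightly cleaner than what the paper itself does elsewhere. In Proposition~2.1 the paper obtains the analogous conclusion only on $\supp(\mu_o)$ and then needs strict ergodicity to upgrade it to all of $X_o$; your route avoids that hypothesis entirely, which is a small gain. The implication ``unique ergodicity $\Rightarrow$ $C(X_o)^{\theta_o}=\bc\idd$'' that you invoke is recorded in the paper's preliminaries, so you are on safe ground.
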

\begin{proof}
The proof is analogous to that of \cite{DFR}, Proposition 10.2.
\end{proof}
\begin{rem}\label{k_o}
According to the results of Furstenberg, $n_o=0$ corresponds to unique ergodicity. The case $k_o=0$ corresponds to topological ergodicity, and finally $k_o=1$ to unique ergodicity w.r.t. the fixed-point subalgebra thanks to Proposition 10.7 in \cite{DFR}.
\end{rem}
From now on, we suppose that $n_o>0$ if is not otherwise specified, where $n_o$ is defined in Proposition \ref{nzmz}.

For $k\geq1$, denote by $\ca_k\subset L^\infty(X_o\times \bt)$ the $C^*$-algebra
generated by the functions $a_{lk}$ with 
\begin{equation}\label{characters}
a_{n}(x, z):=\big(u_{n_0}(x)z^{n_o}\big)^n\,,\quad n\in\bz\,.
\end{equation}
Obviously, all functions of $\mathcal{A}_k$ are $\Phi_{\th_o,f}$-invariant, $\mu$-a.e.\,\,.

We report the following facts which are direct consequences of Proposition \ref{iso0} and the Fej\'er-Riesz Theorem, respectively.
\begin{cor}
\label{iso}
For each integer $k\geq1$, $\ca_k\sim C(\bt)$ in the $*$-isomorphism that sends $\big(u_{n_0}\chi_{n_o}\big)^{kl}$ to the character $\chi_l(z):=z^l$, $l\in\bz$.
\end{cor}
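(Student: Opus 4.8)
The plan is to deduce Corollary~\ref{iso} directly from Proposition~\ref{iso0} applied to a suitable unitary. Set $U:=u_{n_0}\chi_{n_o}\in L^\infty(X_o\times\bt)$, a unitary since $|u_{n_0}|=1$ $\m_o$-a.e.\ (recall the normalisation of solutions of \eqref{sifo} from \cite{DFGR}, Remark 4.2) and $|\chi_{n_o}|=1$. The algebra $\ca_k$ is by definition the $C^*$-algebra generated by $\{a_{lk}:l\in\bz\}=\{U^{lk}:l\in\bz\}$, hence $\ca_k=C^*(U^k,\idd)$. So the statement to prove is that $C^*(U^k,\idd)\sim C(\bt)$ via $U^{kl}\mapsto\chi_l$, and by Proposition~\ref{iso0} (with $U^k$ in place of the abstract unitary) this follows once we verify the independence condition: whenever $(\l_l)_{l\in\bz}\in\ell^1(\bz)$ and $\sum_{l}\l_l U^{kl}=0$ in $L^\infty(X_o\times\bt)$, then all $\l_l=0$.

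First I would expand $\sum_l\l_l U^{kl}=\sum_l\l_l\, u_{n_0}^{kl}(x)\,z^{kln_o}$ and read this as a function on $X_o\times\bt$; for each fixed $x\in X_o$ this is a Laurent polynomial (or $\ell^1$-series) in $z$ supported on the arithmetic progression $kln_o$, $l\in\bz$. Since distinct $l$ give distinct exponents $kln_o$ (as $k,n_o\geq1$), the uniqueness of Fourier coefficients on $\bt$ forces, for $m_o$-a.e.\ $z$ and every $l$, $\l_l u_{n_0}^{kl}(x)=0$, hence $\l_l=0$ because $|u_{n_0}(x)|=1$ $\m_o$-a.e. Equivalently and more cleanly: integrating against $\overline{z^{kln_o}}\,\frac{\di z}{2\pi\imath z}$ picks out $\l_l u_{n_0}^{kl}(x)$, which must vanish $\m_o$-a.e.; since the modulus of $u_{n_0}$ is $1$ almost everywhere, $\l_l=0$. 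This is exactly the hypothesis of Proposition~\ref{iso0} for the unitary $U^k$, so $C^*(U^k,\idd)\cong C(\bt)$ through $(U^k)^l=U^{kl}\mapsto\chi_l$, which is the assertion.

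The only genuinely delicate point is making sure the almost-everywhere bookkeeping is legitimate: the identity $\sum_l\l_l U^{kl}=0$ holds as an element of $L^\infty(X_o\times\bt)$, i.e.\ $\m_o\times m$-a.e., and one must be able to fix a single $\m_o$-conull set of $x$ for which the $z$-slice vanishes $m$-a.e.; this is routine from Fubini together with $\ell^1$-convergence of the series (so that term-by-term integration in $z$ is justified for a.e.\ $x$). I expect this measure-theoretic step to be the main, though very mild, obstacle; everything else is a direct citation of Proposition~\ref{iso0}, whose proof in turn reduces the matter to the fact that $\ell^1(\bz)$ as a convolution algebra has a unique $C^*$-completion, namely $C(\bt)$. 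I note that the role of the Fej\'er--Riesz Theorem (Proposition~\ref{wcrf}) is not needed for this particular corollary — it enters the companion statement about positive elements — so the proof here is short and self-contained once the independence of the $U^{kl}$ is in hand.
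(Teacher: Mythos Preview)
Your proposal is correct and follows essentially the same route as the paper: both apply Proposition~\ref{iso0} to the unitary $U^k=(u_{n_0}\chi_{n_o})^k$ and verify the $\ell^1$-independence hypothesis by integrating the vanishing series against the appropriate character on $\bt$ (the paper multiplies by $(u_{n_o}\chi_{n_o})^{-kl'}$ and integrates over $X_o\times\bt$ using Fubini, which is exactly your ``more cleanly'' version with an additional harmless integration over $X_o$). Your remark that Proposition~\ref{wcrf} is not needed here is also in line with the paper, which invokes Fej\'er--Riesz only for the subsequent positivity remark.
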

\begin{proof}
It will follow from Proposition \ref{iso0}, once we have
verified its hypothesis is satisfied. 

To this aim, let $(\lambda_l)_{l\in\bz}\in \ell^1(\bz)$, such that $\sum_{l\in\bz} \lambda_l\big(u_{n_o}\chi_{n_o}\big)^{kl}=0$.
Note that the above series converges totally.

For every $l'\in\bz$, multiplying both members of the above series by $\big(u_{n_0}\chi_{n_o}\big)^{-kl}$ one has 
$$
0=\sum_{l\in\bz} \lambda_l\big(u_{n_o}\chi_{n_o}\big)^{kl}\big(u_{n_o}\chi_{n_o}\big)^{-kl'}=\sum_{l\in\bz} \lambda_l u_{n_o}^{k(l-l')}  \chi_{n_o}^{k(l-l')}\,.
$$
Integrating both members against the product measure $\di\m_o\times\di m$, exchanging the integral with the sum and finally applying Fubini Theorem, we get
\begin{align*}
0=&\int_{X_o\times\bt}\bigg(\sum_{l\in\bz} \lambda_l u_{n_o}(x)^{k(l-l')} \chi_{n_o}(z)^{k(l-l')}\bigg)\di\m_o(x)\times\di m(z)\\
=&\sum_{l\in\bz}\lambda_l \int_{X_o}u_{n_o}^{k(l-l')}(x){\rm d}\mu_o(x)\oint z^{kn_o(l-l')}\frac{\di z}{2\pi\imath z}\\
=&\sum_{l\in\bz}\lambda_l\d_{l,l'}=\lambda_{l'}\,,
\end{align*}
which concludes the proof.
\end{proof}
\begin{rem} 
Notice that, if $\ca_k\ni b=\sum_{|l|\leq L}b_l\big(u_{n_0}(x)z^{n_o}\big)^{kl}$ is positive, that is $b=c^*c$ for some $c\in\ca_k$, then there exists 
$a=\sum_{l=0}^La_l\big(u_{n_0}(x)z^{n_o}\big)^{kl}$ such that $b=a^*a$.
\end{rem}
\begin{proof}
It easily follows by Corollary \ref{iso} and Fej\'er-Riesz Theorem.
\end{proof}
We denote by $\r_k:\ca_k\rightarrow C(\bt)$ the isomorphisms described by Corollary \ref{iso}. We also note that $C(X_o\times\bt)^{\F_{\theta_o, f}}$ is isomorphic with 
$C(\bt)$ whenever $k_o>0$, see also  \cite{DFR}, Proposition 10.8. For $x\in C(X_o)$, $z\in\bt$ and $l\in\bz$, we indeed denote by $\sigma$ the map given by
\begin{equation}
\label{sigma}
C(\bt)\ni\chi_l\mapsto\s(\chi_l)(x,z):=\big(v_{n_o}(x)z^{n_o}\big)^{k_ol}\in C(X_o\times\bt)^{\F_{\theta_o, f}}\,,
\end{equation}
where $v_{n_ok_o}\in C(X_o)$ is a unitary satisfying \eqref{sifo1} for $n=n_ok_o$.

Therefore, the fixed-point subalgebra  $C(X_o\times\bt)^{\F_{\theta_o, f}}$ turns out to be isomorphic with $\ca_{k_o}$ under 
$\rho_{k_o}^{-1}\circ \sigma^{-1}$, which is nothing but
the map that sends each function in $C(X_o\times\bt)^{\F_{\theta_o, f}}$ to its equivalence class
in $L^\infty(X_o\times\bt, \mu)$.

\medskip

We are going to construct $\F_{\theta_o, f}$-invariant conditional expectations from
$C(X_o\times\bt)$ onto the fixed-point subalgebra $C(X_o\times\bt)^{\F_{\theta_o, f}}$. For such a purpose, we next single out a canonical contractive linear map $T$ from
$C(X_o\times\bt)$ to $\mathcal{A}_1$ which is also $\F_{\theta_o, f}$-invariant. To do this, we start by recalling that $h\in C(X_o\times\bt)$ can be expressed as a series
$h(x, z)=\sum_{n\in\bz} h_n(x)z^n$, with 
$h_n(x)=\oint h(x, z)z^{-n}\frac{\di z}{2\pi\imath z}$. 
Thanks to Fej\'er's theorem, the convergence of the series  holds in norm in the Ces\`{a}ro sense.
\begin{prop}
\label{quasiexp}
For $h(x, z)=\sum_{n\in\bz} h_n(x)z^n$,
$$
T(h):=\sum_{l\in\bz} a_{l}\int_{X_o}h_{ln_0}(x)u_{n_0}^{-l}(x){\rm d}\mu_o(x)
$$
with the convergence being understood in norm in the Ces\`{a}ro sense,
defines a linear contractive map of $C(X_o\times\bt)$ to $\mathcal{A}_1$.

Moreover, for $h\in C(X_o\times\bt)$ and $g\in C(X_o\times\bt)^{\Phi_{\th_o,f}}$,
$T(h\circ\Phi_{\th_o,f})=T(h)$, and 
$$
T(gh)=[g]_\mu T(h)\,,\quad T(hg)=T(h)[g]_\mu\,,
$$
where
$[g]_\mu$ denotes the equivalence class of $g$ in $L^\infty(X_o\times\bt, \mu)$.
\end{prop}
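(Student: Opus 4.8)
The plan is to realise $T$ as the limit of its Fej\'er–Ces\`aro partial sums and to push all the asserted identities through that limit. Write $c_l(h):=\int_{X_o}h_{ln_o}(x)u_{n_o}(x)^{-l}\,\di\mu_o(x)$ (a scalar, linear in $h$) and set $T_N(h):=\sum_{|l|\le N}\bigl(1-\tfrac{|l|}{N+1}\bigr)c_l(h)\,a_l\in\ca_1$; these coincide with the genuine Ces\`aro means $\tfrac1{N+1}\sum_{M=0}^N\sum_{|l|\le M}c_l(h)a_l$ of the series in the statement. First I would prove the uniform bound $\|T_N(h)\|\le\|h\|$. Using the isometric isomorphism $\rho_1:\ca_1\to C(\bt)$ of Corollary \ref{iso} (so $\rho_1(a_l)=\chi_l$), one has $\|T_N(h)\|=\sup_{w\in\bt}\bigl|\sum_{|l|\le N}(1-\tfrac{|l|}{N+1})c_l(h)w^l\bigr|$. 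Inserting $c_l(h)=\int_{X_o}\oint_\bt h(x,z)z^{-ln_o}\tfrac{\di z}{2\pi\imath z}\,u_{n_o}(x)^{-l}\di\mu_o(x)$ and interchanging the finite sum with the two integrals, the bracket becomes $\int_{X_o}\oint_\bt h(x,z)\,K_N\!\bigl(wz^{-n_o}u_{n_o}(x)^{-1}\bigr)\tfrac{\di z}{2\pi\imath z}\di\mu_o(x)$, where $K_N(\zeta)=\sum_{|l|\le N}(1-\tfrac{|l|}{N+1})\zeta^l\ge0$ is the Fej\'er kernel. Since $|u_{n_o}|=1$ $\mu_o$-a.e. (we normalise $u_{n_o}$ this way, cf.\ the discussion after \eqref{sifo1}), since $z\mapsto z^{-n_o}$ pushes the Haar measure of $\bt$ onto itself, and since $\oint_\bt K_N=1$, the inner double integral of $K_N(\cdots)$ equals $1$; as $\mu_o(X_o)=1$ this gives $\|T_N(h)\|\le\|h\|_\infty$.

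Next comes convergence. On the dense $*$-subalgebra of those $h$ that are trigonometric polynomials in $z$ with coefficients in $C(X_o)$, only finitely many $c_l(h)$ are nonzero, so $T_N(h)$ converges trivially to $\sum_l c_l(h)a_l$. For general $h\in C(X_o\times\bt)$ the uniform estimate $\|T_N(h-h')\|\le\|h-h'\|_\infty$ shows $(T_N(h))_N$ is Cauchy in the complete $C^*$-algebra $\ca_1$; its limit $T(h)$ therefore exists, lies in $\ca_1$ (each $T_N(h)$ does and $\ca_1$ is norm-closed), satisfies $\|T(h)\|\le\|h\|$, and is linear in $h$ since each $T_N$ is. This is exactly the map described in the statement.

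For invariance, the $n$-th Fourier coefficient of $h\circ\Phi_{\theta_o,f}$ is $x\mapsto h_n(\theta_o(x))f(x)^n$; combining the cohomological equation $u_{n_o}(\theta_o(x))f(x)^{n_o}=u_{n_o}(x)$ with $|u_{n_o}|=1$ yields $f(x)^{ln_o}u_{n_o}(x)^{-l}=u_{n_o}(\theta_o(x))^{-l}$ $\mu_o$-a.e., hence $c_l(h\circ\Phi_{\theta_o,f})=\int_{X_o}h_{ln_o}(\theta_o(x))u_{n_o}(\theta_o(x))^{-l}\di\mu_o(x)=c_l(h)$ by $\theta_o$-invariance of $\mu_o$; so $T_N(h\circ\Phi_{\theta_o,f})=T_N(h)$ for all $N$, and the same holds in the limit. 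For the module property, since both $\ca_1$ and $C(X_o\times\bt)$ are commutative it suffices to prove $T(gh)=[g]_\mu T(h)$; by continuity of $T$ and of multiplication, and the fact that $[\,\cdot\,]_\mu$ maps $C(X_o\times\bt)^{\Phi_{\theta_o,f}}$ into $\ca_1$, it is enough to take $h$ a trigonometric polynomial in $z$ (so that all sums are finite) and $g$ ranging over a dense generating family of $C(X_o\times\bt)^{\Phi_{\theta_o,f}}\cong C(\bt)$; if $k_o=0$ the fixed-point algebra is $\bc\idd$ and the identity is clear, so I assume $k_o\ge1$ and take $g=w_l$, $w_l(x,z):=v(x)^lz^{m_ol}$ with $v\in C(X_o)$ a unitary solving \eqref{sifo1} at $n=m_o=k_on_o$. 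As $u_{n_o}^{k_o}$ solves \eqref{sifo} at $n=m_o$, uniqueness of the solution up to a scalar (Proposition \ref{nzmz}) gives $v=\alpha\,u_{n_o}^{k_o}$ $\mu_o$-a.e.\ with $|\alpha|=1$, whence $[w_l]_\mu=\alpha^l a_{k_ol}$; a direct computation using $m_o=k_on_o$ then gives $c_m(w_lh)=\alpha^l c_{m-k_ol}(h)$ for every $m$, i.e.\ $T(w_lh)=\alpha^l a_{k_ol}T(h)=[w_l]_\mu T(h)$, and density closes the argument.

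I expect the only genuine subtlety — the main obstacle — to lie in making the Ces\`aro summation rigorous in the contractivity step: one must handle the interchange of the (finite) sum with the integrals, the change of variables $z\mapsto z^{-n_o}$, and the a.e.\ unimodularity of $u_{n_o}$ carefully in order to recognise the Fej\'er kernel and to see that its average is exactly $1$. Once the uniform bound $\|T_N(h)\|\le\|h\|$ is in hand, everything else is routine Fourier bookkeeping that transfers finite-sum identities to the norm limit by density of trigonometric polynomials.
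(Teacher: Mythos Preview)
Your argument is correct and follows a genuinely different route from the paper. The paper factors $T$ as a composition $T=(\id\otimes\int\cdot\,\di\mu_o)\circ\iota\circ\ce_{n_o}$, where $\ce_{n_o}$ is the canonical expectation onto $C(X_o\times\bt)^{\beta_{n_o}}$ and $\iota$ is a $*$-homomorphism into $\ca_1\otimes L^\infty(X_o,\mu_o)$ defined on generators; the whole technical weight falls on showing $\iota$ is bounded, which the paper obtains by proving it is \emph{positive} via a parametric Fej\'er--Riesz theorem (Proposition~\ref{wcrf}, itself a nontrivial result about Borel-measurable square roots). You bypass this machinery entirely: working directly with the Fej\'er--Ces\`aro partial sums and recognising the Fej\'er kernel $K_N$ inside the double integral, you get the contractivity $\|T_N(h)\|\le\|h\|$ from positivity and unit mass of $K_N$ alone, then pass to the limit by density. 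Your approach is more elementary and self-contained, needing only standard Fourier analysis on $\bt$; the paper's approach is more structural and yields, as a by-product, the independent-interest Proposition~\ref{wcrf}. The invariance check is essentially the same in both proofs, and your treatment of the bimodule property (reducing to generators $w_l$ and using the uniqueness of solutions up to a scalar from Proposition~\ref{nzmz}) is a clean alternative to the paper's brief remark that the proof ``is similar''.
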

\begin{proof}
On $X_o\times\bt$, we consider the periodic homeomorphism $\id\!\times\!R_{2\pi\imath/n_o}$, together with the fixed-point subalgebra 
$$
C(X_o\times\bt)^{\b_{n_o}}=\bigg\{\sum_{l\in\bz}g_l(x)z^{ln_o}\mid g_l(x)\in C(X_o)\bigg\}
$$
w.r.t. the dual action associated to such an homeomorphism. A canonical $\b_{n_o}$-invariant faithful conditional expectation $\ce_{n_o}$ onto $C(X_o\times\bt)^{\b_{n_o}}$ is given 
\eqref{perce}.

We now claim that  $C(X_o\times\bt)^{\b_{n_o}}$ embeds  into the tensor product
$\mathcal{A}_1\otimes L^\infty(X_o, \mu_o)$ through the map $\iota$, completely determined on generators by
$$
\iota(h(x)z^{ln_o})= a_{l}(x_1, z)\otimes h(x_2)\overline{u_{n_0}(x_2)^l}\,.
$$
Indeed, on the involutive subalgebra 
$$
\cc_o:=\bigg\{\sum_{l\in F}g_l(x)z^{ln_o}\mid g_l(x)\in C(X_o),\,\,F\subset\bz\,\,\text{finite}\bigg\}\,,
$$
$\iota$ is a well-defined $*$-homomorphism. 

We want to show that $\iota$ is a positive map between the operator system $\cc_o$ and the $C^*$-algebra $\mathcal{A}_1\otimes L^\infty(X_o, \mu_o)$. To this aim, we first note that 
$\iota$ extends to $\cb\cb_o$, the $*$-algebra made of elements of the form $\sum_{l\in F}g_l(x)z^{ln_o}$, $F\subset\bz$ finite and $g_l$ bounded Borel functions on $X_o$. 

We now fix an element 
$h\in\cc_o$ such that $h=c^*c$ for some $c\in\cc$, and define $h_n:=h+1/n$. The $h_n$ belong to $\cb\cb_o$ and are strictly positive, and thus by Proposition \ref{wcrf}, 
there exists $(b_n)_n\subset\cb\cb_o$ such that $h=b_n^*b_n$. Therefore, 
$$
\iota(h)+\frac1{n}=\iota(h_n)=\iota(b_n^*b_n)=\iota(b_n)^*\iota(b_n)\in V\,,
$$
the convex cone of the positive elements of the $C^*$-algebra $\mathcal{A}_1\otimes L^\infty(X_o, \mu_o)$, which is closed by \cite{T}, Theorem I.6.1. 

By taking the limit on $n$, we easily deduce that $\iota:\cc_o\to \mathcal{A}_1\otimes L^\infty(X_o, \mu_o)$ is a positive map, and thus
we can apply \cite{P}, Proposition 2.1, to conclude that $\iota$ is bounded on $\cc_o$. Therefore, it extends to a bounded map on the whole 
$C(X_o\times\bt)^{\b_{n_o}}=\overline{\cc_o}$ which will be also a 
$*$-homomorphism denoted again by $\iota$ with an abuse of notation. 
The claim is now proved by noticing that $T$ can be expressed as the composition of three bounded maps as 
$T= \Big({\rm id}\otimes\int\,{\bf\cdot}\,\di\m_o\Big)\circ \iota\circ\ce_{n_o}$.

To end the proof, it is enough to verify the $\F_{\th_o,f}$ invariance on the total set of generators of the form $h(x,z)=g(x)z^n$ of $C(X_o\times\bt)$, the proof of the
module-map properties being similar. 

Indeed, if $n\neq0$ is not a multiple of $n_o$, then $T(h)=0=T(h\circ\F_{\th_o,f})$. If instead $n=ln_o$, then
\begin{align*}
T(h\circ\F_{\th_o,f})=&a_{l}\int_{X_o} g(\theta_o(x))f(x)^{ln_0}{u_{n_0}(x)}^{-l}{\rm d}\mu_o\\
=&a_{l}\int_{X_o}g(\theta_o(x))u_{ln_0}(\theta_o(x))^{-l}{\rm d}\mu_o\\
=&a_{l}\int_{X_o}g(x)u_{ln_0}(x)^{-l}{\rm d}\mu_o
=T(h).
\end{align*}
\end{proof}
\begin{rem}
\label{rcfuce}
Note that, when there are only continuous solutions of the cohomological equation ({\it i.e.} $k_o=1$), the map $\s\circ\r_1\circ T$, $\s$ given in \eqref{sigma},  yields an invariant conditional expectation onto the fixed-point subalgebra.
By \cite{DFR}, Theorem 10.7, and \cite{AD}, Theorem 3.2, it is in fact the unique invariant conditional expectation on the fixed-point subalgebra. 

When instead there are no nontrivial solutions of the cohomological equation ({\it i.e.} $n_o=0$), it is easily checked that the map $T$ yields the state on $C(X_o\times\bt)$ corresponding to $\int_{X_o\times\bt}\,{\bf\cdot}\,\di\m_o\times \di\th/2\pi$ which is the unique invariant measure under the action of $\F_{\th_o,f}$.
\end{rem}
The following ought to be known. Nevertheless, we include a sketched proof for convenience and establish some notation.
Given a square matrix $C=(c_{i,j})_{i,j=1}^k\in\bm_k(\mathbb{C})$, for $0\leq l<k-1$ its $l$-diagonal is the set of the entries
$\{c_{1, l+1}, c_{2, l+2}, \ldots, c_{k-l,k}\}$, and its $l$-trace is the number ${\rm tr}_l(C):=\sum_{j=1}^{k-l}c_{j, l+j}$.
Note that ${\rm tr}_0$ is the usual trace $\tr$ of $\bm_k(\bc)$.

Denote by $\cb_k\subset C(\bt)$ the $C^*$-algebra generated by all powers of the function $z^k$. For each $A\in M_k(\mathbb{C})_+$ with $\tr(A)=1$ and 
$x\in \pi_k(C(\bt))\subset M_k(C(\bt))$, where $\pi_k: C(\bt)\rightarrow M_k(C(\bt))$ is the
$^*$-monomorphism considered in Example \ref{exmat}, set
\begin{equation}
\label{attm}
\widetilde{F}_{A}(x):=
\begin{pmatrix}
\tr(Ax)& 0 & \cdots & 0 \\
0 & \tr(Ax) & \cdots & 0 \\
\vdots  & \vdots  & \ddots & \vdots  \\
0 & \cdots & 0 & \tr(Ax)
\end{pmatrix}
\end{equation}

By easy computations, one can verify that $\widetilde{F}_{A}$
is a conditional expectation from $\pi_k(C(\bt))$
onto $\pi_k(\mathcal{B}_k)$.
\begin{lem}
\label{caleb}
For any $A\in M_k(\mathbb{C})_+$ with $\tr(A)=1$, the map $F_{A}:=\pi_k^{-1}\circ \widetilde{F}_{A}\circ \pi_k$ provides a conditional expectation of $C(\bt)$ onto $\mathcal{B}_k$. Moreover, $F_{A}= F_{B}$ if and only if 
${\rm tr}_l(A)={\rm tr}_l(B)$ for every $l=1, \ldots, k-1$.
\end{lem}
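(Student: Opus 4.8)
The plan is to verify the two assertions separately, working throughout with the characters $\chi_n(z)=z^n$ of $C(\bt)$ and their images $U_k^n\in\pi_k(C(\bt))$, which are dense (in the Ces\`aro sense) so that it suffices to compute on these generators. First I would record the explicit action of $\widetilde F_A$ on $U_k^n$: writing $n=qk+r$ with $0\leq r<k$, the matrix $U_k^n$ has its nonzero entries precisely on the $r$-diagonal and the $(r-k)$-diagonal (the ``wrap-around'' part carrying a factor $z$), so that $\tr(AU_k^n)$ is a fixed linear combination of the entries of $A$ lying on the $(k-r)$-diagonal and the $r$-diagonal, multiplied by $z^{q}$ or $z^{q+1}$ as appropriate; in particular $\tr(AU_k^n)$ depends on $A$ only through the numbers ${\rm tr}_l(A)$ together with their conjugate-diagonal counterparts. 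Since $A=A^*$, the lower diagonals are determined by the upper ones, hence $\tr(AU_k^n)$ for all $n$ depends on $A$ only through $\tr_0(A)=1$ and $\tr_1(A),\dots,\tr_{k-1}(A)$. Because $\pi_k$ is injective, $F_A(\chi_n)=\pi_k^{-1}\big(\tr(AU_k^n)\,I_k\big)$, and $\tr(AU_k^n)\,I_k$ indeed lies in $\pi_k(\cb_k)=\pi_k(C(\bt))\cap\{\text{scalar matrices over }C(\bt)\}$ — more precisely $\tr(AU_k^n)$ is a scalar multiple of $z^{\lceil n/k\rceil}$ or $z^{\lfloor n/k\rfloor}$, hence of the form $c\,\chi_{mk}$ only when $k\mid n$, and is $0$ otherwise; this is exactly the statement that $F_A$ maps into $\cb_k$ and fixes $\cb_k$ pointwise (when $k\mid n$, $U_k^n$ is the scalar matrix $z^{n/k}I_k$, so $\tr(AU_k^n)=z^{n/k}$). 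Combined with the observation in the excerpt that $\widetilde F_A$ is a conditional expectation onto $\pi_k(\cb_k)$, conjugating by the isomorphism $\pi_k$ shows $F_A$ is a conditional expectation of $C(\bt)$ onto $\cb_k$.

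Next I would prove the characterization of equality. The implication ``$\tr_l(A)=\tr_l(B)$ for all $l$ $\Rightarrow$ $F_A=F_B$'' is immediate from the first paragraph: the computation shows $\tr(AU_k^n)$ is a universal expression in $\tr_0(A),\dots,\tr_{k-1}(A)$ (and their conjugates, which are redundant by self-adjointness), so if these agree for $A$ and $B$ then $\tr(AU_k^n)=\tr(BU_k^n)$ for every $n$, whence $F_A$ and $F_B$ agree on all generators and therefore everywhere. For the converse, suppose $F_A=F_B$, i.e. $\tr(AU_k^n)=\tr(BU_k^n)$ for all $n\in\bz$. Taking $n$ with $0\le n< k$ picks out, via the wrap-around structure, the $(k-n)$-diagonal sum of $A$ plus a $z$-times the $n$-diagonal sum; but these two contributions sit in different Fourier modes of $z$ (degree $0$ and degree $1$), so comparing Fourier coefficients gives $\tr_{k-n}(A)=\tr_{k-n}(B)$ for $n=1,\dots,k-1$ (and the $n=0$ case just restates $\tr(A)=\tr(B)=1$). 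Letting $n$ range over $1,\dots,k-1$ yields $\tr_l(A)=\tr_l(B)$ for all $l=1,\dots,k-1$, as desired.

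The one point requiring genuine care — the main obstacle — is getting the bookkeeping of the diagonals of $U_k^n$ exactly right, including the placement of the $z$-factor coming from the top-right corner of $U_k$ and the resulting split of $\tr(AU_k^n)$ into two Fourier modes in $z$; this is what makes the diagonals decouple and drives both directions of the equivalence. Everything else is formal: that $\widetilde F_A$ is a conditional expectation is granted by the excerpt, and conjugation by the $*$-isomorphism $\pi_k$ onto its image transports this property to $F_A$ without fuss. I would therefore organize the proof as: (i) compute $U_k^n$ and $\tr(AU_k^n)$ explicitly, noting the two-mode decomposition; (ii) deduce $F_A(\chi_n)\in\cb_k$ with $F_A|_{\cb_k}=\id$, hence $F_A$ is the claimed conditional expectation; (iii) read off both implications of the equality criterion from the explicit formula by matching Fourier coefficients in $z$.
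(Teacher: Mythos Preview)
Your approach is essentially the paper's own: both reduce $F_A=F_B$ to $\tr(AU_k^{\,l})=\tr(BU_k^{\,l})$ for all $l$, compute this trace for $1\le l\le k-1$ as a two-term polynomial $\tr_l(A)+z\,\overline{\tr_{k-l}(A)}$ (the paper writes it as $\tr_l(A)+z\,\tr_{k-l}(A)$), separate the $z^0$ and $z^1$ Fourier modes to extract the individual $\tr_l$'s, and note that $U_k^k=zI_k$ reduces the cases $l\ge k$ to these; for the first assertion both simply conjugate the given conditional expectation $\widetilde F_A$ by the $*$-isomorphism $\pi_k$.

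One factual slip to fix: your claim that $F_A(\chi_n)=0$ when $k\nmid n$ is false. For $n=qk+r$ with $0<r<k$ one has $\tr(AU_k^{\,n})=z^q\tr_r(A)+z^{q+1}\overline{\tr_{k-r}(A)}$, whence $F_A(\chi_n)=\tr_r(A)\,\chi_{qk}+\overline{\tr_{k-r}(A)}\,\chi_{(q+1)k}$, which is generically nonzero. This does no damage to your argument---the image is still in $\cb_k$ because every scalar matrix $g(z)I_k$ lies in $\pi_k(\cb_k)$, and you already (correctly) invoke the conjugation-by-$\pi_k$ argument---but the sentence ``is $0$ otherwise'' should be deleted, and your later bookkeeping (``the $(k-n)$-diagonal sum of $A$ plus $z$-times the $n$-diagonal sum'') has the two diagonals swapped relative to the actual computation.
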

\begin{proof}
That  $F_{A}$ is a conditional expectation is a straightforward consequence of
its definition, for $\widetilde{F}_{A}$ is a conditional expectation and $F_{A}$ is obtained out of the latter
by conjugation by the $*$-isomorphism $\pi_k$.
 
As for the second part of the statement, given two positive matrices $A=(a_{i,j}), B=(b_{i,j})\in \bm_k(\bc)$ with ${\rm tr}_0(A)={\rm tr}_0(B)=1$, one has 
$\widetilde{F}_{A}= \widetilde{F}_{B}$ if and only
if ${\rm tr}_0(Ax)={\rm tr}_0(Bx)$ for any $x\in C^*(U)$, which is the same as 
${\rm tr}_0(AU^l)={\rm tr}_0(BU^l)$ for every $l\in\bz$. 
It is actually enough to consider only positive values of the integer $l$ thanks to the equality
$$
{\rm tr}_0(AU^{-l})={\rm tr}_0((U^lA)^*)=\overline{{\rm tr}_0(U^lA)}=\overline{{\rm tr}_0(AU^l)}\,,
$$ 
which holds for every $l\in\bn$.
For $l=1, 2, \ldots, k-1$, the conditions ${\rm tr}_0(AU^l)={\rm tr}_0(BU^l)$ can be written explicitly as
\begin{equation}
\label{conditions}
z{\rm tr}_{k-l}(A)+{\rm tr}_l(A)=z{\rm tr}_{k-l}(B)+{\rm tr}_l(B), \,\textrm{for any}\, z\in\bt.
\end{equation}

If ${\rm tr}_l(A)={\rm tr}_l(B)$ for every $l=1, 2, \ldots, k-1$ then the equalities \eqref{conditions}
are certainly satisfied. Conversely, rewrite \eqref{conditions}
as $z({\rm tr}_{k-l}(A)-{\rm tr}_{k-l}(B))={\rm tr}_l(B)-{\rm tr}_l(A)$, for any $z\in\bt$. Since the r.h.s. of the last equality
does not depend on $z$, the only possibility is ${\rm tr}_{k-l}(A)-{\rm tr}_{k-l}(B)=0$, hence ${\rm tr}_l(A)={\rm tr}_l(B)$ for
any $l=1, 2, ..., k-1$.

Finally if $l\geq k$, we can rewrite $l$ as $l=mk+l'$, for some $m\in\bn$ and $l'=0, 1,\ldots, k-1$. But then
${\rm tr}_0(AU^l)={\rm tr}_0(AU^{mk}U^{l'})={\rm tr}_0(U^{mk}AU^{l'})=z^m{\rm tr}_0(AU^{l'})=z^m (z{\rm tr}_{k-l'}(A)+{\rm tr}_{l'}(A))$ (or simply
$z^m{\rm tr}_0(A)$ when $l'=0$), which means
${\rm tr}_0(AU^l)={\rm tr}_0(BU^l)$ is still satisfied. This ends the proof.
\end{proof}
\begin{prop}
\label{manycond}
Let $m_o=n_ok_o$ be different from $0$. Then, for $A\in M_{k_o}(\mathbb{C})_+$ with $\tr(A)=1$, the map
\begin{equation}
\label{uncd}
E_A:=\s\circ F_A\circ\r_1\circ T
\end{equation}
provides an invariant conditional expectation of $C(X_o\times \bt)$ onto the fixed-point subalgebra $C(X_o\times \bt)^{\F_{\th_o,f}}$.

Moreover, $E_A= E_B$ if and only if 
${\rm tr}_l(A)={\rm tr}_l(B)$ for every $l=1, \ldots, k-1$, where ${\rm tr}_l$ is the $l$-trace.
\end{prop}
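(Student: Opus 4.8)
The plan is to read every required property of $E_A$ off the shape of the composition $\s\circ F_A\circ\r_1\circ T$: of its four factors, $\r_1$ and $\s$ are fixed $*$-isomorphisms, $F_A$ is a conditional expectation by Lemma \ref{caleb}, and $T$ is the contractive $\F_{\th_o,f}$-invariant module map of Proposition \ref{quasiexp}; thus the dependence on the dynamics is confined to $T$ and the dependence on $A$ to $F_A$.

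First I would dispose of unitality and positivity. By the proof of Proposition \ref{quasiexp}, $T=\big(\id\otimes\!\int{\bf\cdot}\,\di\m_o\big)\circ\iota\circ\ce_{n_o}$ is a composition of a conditional expectation, a $*$-homomorphism and a slice map by a state, hence unital and completely positive; in particular $T(\idd)=\idd$ and $\|T\|=1$. As $\r_1,\s$ are $*$-isomorphisms and $F_A$ is a unital conditional expectation, all four factors are unital, positive and contractive, so $E_A$ is a unital, completely positive, norm-one map taking values in the range of $\s$, namely $C(X_o\times\bt)^{\F_{\th_o,f}}$. Invariance is then immediate: only $T$ sees the dynamics and $T(h\circ\F_{\th_o,f})=T(h)$ by Proposition \ref{quasiexp}, so $E_A(h\circ\F_{\th_o,f})=E_A(h)$ for all $h\in C(X_o\times\bt)$.

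The crux is to prove that $E_A$ restricts to the identity on $C(X_o\times\bt)^{\F_{\th_o,f}}$; granted this, $E_A$ is a norm-one idempotent with range $C(X_o\times\bt)^{\F_{\th_o,f}}$, hence a conditional expectation by Tomiyama's theorem. So fix $g\in C(X_o\times\bt)^{\F_{\th_o,f}}$. The module identity of Proposition \ref{quasiexp} gives $T(g)=[g]_\m\,T(\idd)=[g]_\m$; since $\r_{k_o}^{-1}\circ\s^{-1}$ is exactly the map carrying a fixed-point function to its $\m$-class, $[g]_\m$ lies in $\ca_{k_o}\subseteq\ca_1$, hence $\r_1(T(g))\in\r_1(\ca_{k_o})=\cb_{k_o}$ and is left untouched by $F_A$. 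Unwinding the isomorphisms $\r_1,\r_{k_o},\s$ — which comes down to matching, via the natural identification $\cb_{k_o}\cong C(\bt)$, the several copies of $C(\bt)$ in play (namely $\ca_1$, its subalgebra $\ca_{k_o}$, $\cb_{k_o}$, and $C(X_o\times\bt)^{\F_{\th_o,f}}$) — one finds that the resulting element is the unique fixed-point function with $\m$-class $[g]_\m$, i.e. $g$ itself, so $E_A(g)=g$. I expect this bookkeeping to be the only genuinely delicate point: it is what guarantees that $E_A$ is idempotent \emph{onto all of} the fixed-point subalgebra rather than a proper part of it; everything else is formal. (Tomiyama can be sidestepped: $C(X_o\times\bt)^{\F_{\th_o,f}}$-bimodularity drops out just as directly by pushing the full module identity $T(ghg')=[g]_\m T(h)[g']_\m$ through the homomorphisms $\r_1$, $F_A|_{\cb_{k_o}}$ and $\s$.)

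For the last assertion, injectivity of $\s$ reduces $E_A=E_B$ to the equality $F_A\circ(\r_1\circ T)=F_B\circ(\r_1\circ T)$ on $C(X_o\times\bt)$. Here I would check that $\r_1\circ T$ has dense range in $C(\bt)$: taking $h(x,z):=g(x)z^{n_ol}$ with $g\in C(X_o)$ chosen so that $\int_{X_o}g\,u_{n_o}^{-l}\,\di\m_o=1$ — possible because $g\mapsto\int_{X_o}g\,u_{n_o}^{-l}\,\di\m_o$ is a nonzero linear functional — one computes $T(h)=a_l\int_{X_o}g\,u_{n_o}^{-l}\,\di\m_o=a_l$, so $\r_1(T(h))=\chi_l$, and hence every character, and every trigonometric polynomial, lies in the range of $\r_1\circ T$. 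As $F_A-F_B$ is bounded, $F_A\circ(\r_1\circ T)=F_B\circ(\r_1\circ T)$ is therefore equivalent to $F_A=F_B$ on all of $C(\bt)$, which by the second half of Lemma \ref{caleb} (with $k=k_o$) holds if and only if ${\rm tr}_l(A)={\rm tr}_l(B)$ for $l=1,\dots,k_o-1$.
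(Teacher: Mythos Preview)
Your proof is correct and follows essentially the same route as the paper's. The paper's proof is extremely terse---it simply asserts that the conditional-expectation property ``follows immediately from the properties of $T$ stated in Proposition \ref{quasiexp}, by taking into account the identification'' \eqref{sigma}, and that the second statement follows from the density of the range of $T$ in $\ca_1$ together with Lemma \ref{caleb}. You unpack exactly these two sentences: you make the positivity and module properties explicit via the decomposition $T=(\id\otimes\!\int{\bf\cdot}\,\di\m_o)\circ\iota\circ\ce_{n_o}$, you spell out the bookkeeping among $\r_1$, $\r_{k_o}$, $\cb_{k_o}$ and $\s$ needed to see that $E_A$ restricts to the identity on the fixed-point subalgebra (which the paper's formulation of $\s$ leaves implicit), and you verify the density of the range by producing the characters directly. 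Your invocation of Tomiyama's theorem is a convenient shortcut; as you note, the bimodularity also follows directly from the module identity in Proposition \ref{quasiexp}, which is closer to what the paper has in mind.
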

\begin{proof}
The fact that $E_A$ is an invariant conditional expectation follows immediately from the properties of $T$ stated in Propostion \ref{quasiexp}, by taking into account the identification
$C(\bt)\sim C(X_o\times \bt)^{\F_{\th_o,f}}$ given in \eqref{sigma}.

Since the range of $T$ is dense in $\mathcal{A}_1$, the equality $E_A=E_B$ holds if and only if $F_A=F_B$, and 
the second statement  follows directly from Lemma \ref{caleb}.
\end{proof}
As noticed above ({\it cf.} Remark \ref{rcfuce}), in the case $k_o=1$ that is when there are only continuous solutions of the cohomological equations \eqref{sifo}, \eqref{uncd} provides the unique $\F_{\th_o,f}$-invariant conditional expectation of $C(X_o\times\bt)$ onto the fixed-point subalgebra.

\section{invariant conditional expectations and ergodicity w.r.t. the fixed-point subalgebra}

We start the present section to prove the main result of the paper, that is to answer in positive Question 3.4 in \cite{AD} for the (classical) Anzai skew-product. For such a purpose, we start with the following
\begin{rem}
\label{kap0}
If there are no non trivial continuous solutions to the cohomological equations, then the fixed-point subalgebra
$C(X_o\times\mathbb{T})^{\Phi_{\theta_o, f}}$ is trivial. Therefore, 
a $\Phi_{\theta_o, f}$-invariant conditional expectation is simply described by a $\Phi_{\theta_o, f}$-invariant state. 

Since there are non trivial measurable non-continuous
solutions, the system cannot be uniquely ergodic and thus there exist infinitely many $\Phi_{\theta_o, f}$-invariant states.
For instance, a family of such states is obtained by considering the composition $\varphi\circ T$, where $\varphi$ is any state on
$\mathcal{A}_1$ and $T$ is described in Proposition \ref{quasiexp}.
\end{rem}
We recall that, for the definition of ergodicity w.r.t the fixed-point subalgebra, we are adopting Definition \ref{ADFM} which turns out to be equivalent to the conditions (i)-(v) listed in \cite{AD}, Theorem 3.2 (see also \cite{FM3}, Theorem 2.1).
\begin{thm}
\label{abadyk}
Let $(X_o,\th_o,\m_o)$ be a uniquely ergodic dynamical system, together with the Anzai skew product $\F_{\th_o,f}$ associated to $f\in C(X_o;\bt)$. Then the following are equivalent:
\begin{itemize}
\item[(i)] $\F_{\th_o,f}$ is uniquely ergodic w.r.t. the fixed-point subalgebra,
\item[(ii)] there exists only one $\F_{\th_o,f}$-invariant conditional expectation onto the fixed-point subalgebra.
\end{itemize}
\end{thm}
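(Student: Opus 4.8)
The plan is to prove the two implications separately, the forward direction being essentially a citation and the reverse direction being the substantial content.

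First I would dispose of (i)$\Rightarrow$(ii). This is immediate from the general theory recalled in Section 2: if $\F_{\th_o,f}$ is uniquely ergodic w.r.t. the fixed-point subalgebra in the sense of Definition \ref{ADFM}, then the norm-limit of the Ces\`aro averages defines a conditional expectation $E$ onto $C(X_o\times\bt)^{\F_{\th_o,f}}$, and by \cite{AD}, Theorem 3.2 (equivalently \cite{FM3}, Theorem 2.1), this $E$ is the unique $\F_{\th_o,f}$-invariant conditional expectation onto the fixed-point subalgebra. So nothing new is needed here.

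The bulk of the work is the contrapositive of (ii)$\Rightarrow$(i): assuming $\F_{\th_o,f}$ is \emph{not} uniquely ergodic w.r.t. the fixed-point subalgebra, I must exhibit at least two distinct invariant conditional expectations onto $C(X_o\times\bt)^{\F_{\th_o,f}}$. By Remark \ref{k_o} and \cite{DFR}, Theorem 10.7, the negation of (i) means exactly that $k_o\neq 1$. There are two cases. If $n_o=0$ (equivalently $k_o$ is irrelevant, the fixed-point subalgebra is trivial), then by Remark \ref{kap0} invariant conditional expectations are just invariant states, and since there exist non-continuous measurable solutions the system is not uniquely ergodic in the classical sense (Remark \ref{uefpsb}), so $\cs(C(X_o\times\bt))^{\F_{\th_o,f}}$ has more than one point — in fact the family $\varphi\circ T$ with $\varphi$ ranging over states of $\ca_1\sim C(\bt)$ already gives uncountably many. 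If $n_o>0$ but $k_o\geq 2$, then $m_o=n_ok_o$ is a nonzero multiple of $n_o$ with $k_o\geq 2$, so Proposition \ref{manycond} applies: for each $A\in M_{k_o}(\bc)_+$ with $\tr(A)=1$ the map $E_A=\s\circ F_A\circ\r_1\circ T$ is an invariant conditional expectation onto the fixed-point subalgebra, and $E_A=E_B$ iff ${\rm tr}_l(A)={\rm tr}_l(B)$ for all $l=1,\dots,k_o-1$. Since $k_o\geq 2$ there is at least the invariant $l=1$-trace to vary (e.g. take $A=\tfrac1{k_o}I$ versus any other density matrix with a nonzero off-diagonal entry), producing at least two — indeed uncountably many — distinct invariant conditional expectations. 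This contradicts (ii), completing the proof.

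The main obstacle is not in this final assembly, which is short, but in the machinery it rests on — principally Proposition \ref{manycond} and hence the construction and boundedness of the map $T$ in Proposition \ref{quasiexp}, together with the parametric Fej\'er--Riesz theorem of Proposition \ref{wcrf} used to show $\iota$ is positive (and thus bounded) on the operator system $\cc_o$. Once $T$ is in hand, the parametrization of invariant conditional expectations by $l$-traces of density matrices via Lemma \ref{caleb} makes the non-uniqueness transparent, and the equivalence ``$k_o=1$'' $\Leftrightarrow$ unique ergodicity w.r.t. the fixed-point subalgebra from \cite{DFR} closes the loop. So the theorem itself is a clean corollary of the preceding sections; the real effort has already been spent building $T$ and the family $\{E_A\}$.
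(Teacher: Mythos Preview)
Your overall strategy matches the paper's exactly: dispose of (i)$\Rightarrow$(ii) by citing \cite{AD}, and for the contrapositive of (ii)$\Rightarrow$(i) invoke \cite{DFR}, Theorem 10.7, together with Proposition \ref{manycond} to produce many invariant conditional expectations. The final paragraph correctly identifies where the real work lies.

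There is, however, a genuine slip in your case analysis. You write ``If $n_o=0$ \dots\ since there exist non-continuous measurable solutions the system is not uniquely ergodic''. This is internally inconsistent: $n_o=0$ means precisely that \eqref{sifo} has \emph{no} nontrivial solutions for any $n\neq 0$, so by Furstenberg the system \emph{is} uniquely ergodic and (i) holds --- this case never arises under the negation of (i). What you are actually describing (and what Remark \ref{kap0} treats) is the case $n_o>0$, $k_o=0$: nontrivial measurable solutions exist but none are continuous, the fixed-point subalgebra is trivial, and invariant conditional expectations reduce to invariant states, of which there are many. Your argument for that case is correct; only the label is wrong. The clean dichotomy under $\neg$(i) is: first, $\neg$(i) forces $n_o>0$ (since $n_o=0$ gives unique ergodicity); then either $k_o=0$ (handled by Remark \ref{kap0} / Remark \ref{uefpsb}) or $k_o\geq 2$ (handled by Proposition \ref{manycond}). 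The paper organizes this slightly differently --- it first sets aside $n_o=0$ as a case where both (i) and (ii) hold, then restricts to nontrivial fixed-point subalgebra --- but the content is the same once your labelling is fixed.
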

\begin{proof}
We start by recalling Proposition \ref{nzmz} describing the structure of the solutions of the solutions of the cohomological equations \eqref{sifo} and \eqref{sifo1}. In the sequel we adopt the notations used therein. 

The case when there exists no nontrivial solutions (but the constant ones) corresponds to $n_o=0$ for which the result holds true ({\it e.g.} Remark \ref{uefpsb}). Therefore, we assume that \eqref{sifo} admits nontrivial solutions for some $n\in\bz\smallsetminus\{0\}$, which corresponds $n_o>0$.

Since the implication (i)$\Rightarrow$(ii) holds in general, it remains to prove the implication (ii)$\Rightarrow$(i) for a nontrivial fixed-point subalgebra $C(X_o\times\mathbb{T})^{\Phi_{\theta_o, f}}$.
If (i) does not hold, by Theorem 10.7 in \cite{DFR} there must exist measurable non-continuous solutions
of the cohomological equations, which means Proposition \ref{manycond} applies providing plenty of conditional expectations, and (ii) does not hold either.
\end{proof}
\begin {rem}
\label{3eq}
As noticed before ({\it cf.} \cite{DFR}, Theorem 10.7), the (equivalent) conditions (i) and (ii) are also equivalent to
\begin{itemize}
\item[(iii)] the cohomological equations \eqref{sifo} admit only continuous solutions, that is if $g$ satisfies \eqref{sifo} for some $n\in\bz$, then $g=\pi_{\m_o}(G)$ where $G$ satisfies \eqref{sifo1} for the same $n$ or, in other words, $G\in C(\bt)$ and $g=G$, $\m_o$-a.e.\,\,. 
\end{itemize}
\end{rem}
\noindent
By Proposition \eqref{nzmz} (and with the notations used therein), (iii) corresponds either to $n_o=0$, the uniquely ergodic case, or $n_o>0$ and $k_o=1$, the uniquely ergodic cases with nontrivial fixed-point subalgebra.

\medskip

We now pass to study some properties of the set of all invariant conditional expectations which allow us to characterise unique ergodicity w.r.t. the fixed-point subalgebra. With $F_A$ and $\ce_n$, we refer to the maps in Lemma \ref{caleb} and \eqref{perce}, respectively.
\begin{prop}
\label{domination}
Suppose $m_o>0$. Then the $\Phi_{\th_o,f}$-invariant conditional expectation $E_{\frac{1}{k_o}\idd}=\s\circ F_{\frac{1}{k_o}\idd}\circ\r_1\circ T$ associated with $\frac{1}{k_o}\idd$ satisfies
\begin{equation}\label{absorption}
E_{\frac{1}{k_o}\idd}\circ \mathcal{E}_{m_0}=E_{\frac{1}{k_o}\idd}.
\end{equation}

In addition, every $\Phi_{\th_o,f}$-invariant conditional expectation $E$ from $C(X_o\times\bt)$ onto $C(X_o\times\bt)^{\Phi_{\th_o,f}}$  satisfies
\begin{equation}
\label{PP}
E(a)\leq  m_o E_{\frac{1}{k_o}\idd}(a)\,,\quad a\in C(X_o\times\bt)_+\,.
\end{equation}
\end{prop}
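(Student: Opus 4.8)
The plan is to prove \eqref{absorption} first, and then to derive \eqref{PP} from it by combining two ingredients: the elementary positivity estimate $m_o\mathcal{E}_{m_o}(a)\geq a$ for $a\geq0$, and the rigidity fact that \emph{every} $\F_{\th_o,f}$-invariant conditional expectation onto $C(X_o\times\bt)^{\F_{\th_o,f}}$ is forced to coincide with $E_{\frac{1}{k_o}\idd}$ on the intermediate subalgebra $C(X_o\times\bt)^{\b_{m_o}}$. As for \eqref{absorption}: since $\mathcal{E}_{m_o}$ and $E_{\frac{1}{k_o}\idd}$ are bounded and $\mathcal{E}_{m_o}$ fixes a generator $h(x)z^k$ when $m_o\mid k$ and kills it otherwise, it is enough to check $E_{\frac{1}{k_o}\idd}(h(x)z^k)=0$ whenever $m_o\nmid k$. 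If $n_o\nmid k$ this is clear from $T(h(x)z^k)=0$ (Proposition \ref{quasiexp}); if $k=ln_o$ with $k_o\nmid l$, then $T(h(x)z^k)$ is a scalar multiple of $a_{l}$, hence $\r_1(T(h(x)z^k))$ is a scalar multiple of $\chi_l$, and the explicit form of $F_{\frac{1}{k_o}\idd}$ in the proof of Lemma \ref{caleb} (all $l'$-traces of $\tfrac{1}{k_o}\idd$ with $1\leq l'\leq k_o-1$ vanish) forces $F_{\frac{1}{k_o}\idd}(\chi_l)=0$. These two cases exhaust $m_o\nmid k$, so $E_{\frac{1}{k_o}\idd}=E_{\frac{1}{k_o}\idd}\circ\mathcal{E}_{m_o}$.

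For the rigidity fact, let $E$ be any $\F_{\th_o,f}$-invariant conditional expectation onto $C(X_o\times\bt)^{\F_{\th_o,f}}$. Fix a continuous $\bt$-valued solution $v$ of \eqref{sifo1} at level $m_o$ (it exists since $m_o>0$), so that $v^j$ solves \eqref{sifo1} at level $jm_o$, $v^j(x)z^{jm_o}\in C(X_o\times\bt)^{\F_{\th_o,f}}$, and, with $f^{(n)}:=\prod_{i=0}^{n-1}f\circ\th_o^i$, iterating \eqref{sifo1} gives
$$
g(\th_o^n(x))\,f^{(n)}(x)^{jm_o}=\big(g\,\overline{v^j}\big)(\th_o^n(x))\,v^j(x)\,,\qquad g\in C(X_o),\ j\in\bz,\ n\geq 0\,.
$$
Since $(g(x)z^{jm_o})\circ\F_{\th_o,f}^n=g(\th_o^n(x))f^{(n)}(x)^{jm_o}z^{jm_o}$ and $E$ is $\F_{\th_o,f}$-invariant, one has $E(g(x)z^{jm_o})=E\big(\tfrac1N\sum_{n=0}^{N-1}(g(x)z^{jm_o})\circ\F_{\th_o,f}^n\big)$ for every $N$; as $\tfrac1N\sum_{n=0}^{N-1}(g\,\overline{v^j})\circ\th_o^n\to\int_{X_o}g\,\overline{v^j}\,\di\m_o$ uniformly by unique ergodicity of $\th_o$, letting $N\to\infty$, using norm-continuity of $E$ and that $E$ fixes $C(X_o\times\bt)^{\F_{\th_o,f}}$ pointwise, I obtain
$$
E\big(g(x)z^{jm_o}\big)=\Big(\int_{X_o}g\,\overline{v^j}\,\di\m_o\Big)v^j(x)z^{jm_o}\,.
$$
The right-hand side is independent of $E$; since the elements $g(x)z^{jm_o}$ span a dense subspace of $C(X_o\times\bt)^{\b_{m_o}}$, any two $\F_{\th_o,f}$-invariant conditional expectations agree on $C(X_o\times\bt)^{\b_{m_o}}$, and in particular $E$ and $E_{\frac{1}{k_o}\idd}$ do.

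Now \eqref{PP} follows quickly: for $a\in C(X_o\times\bt)_+$ we have $m_o\mathcal{E}_{m_o}(a)=\sum_{l=0}^{m_o-1}\b_{m_o}^l(a)$, and since each $\b_{m_o}^l(a)\geq0$ with the $l=0$ term equal to $a$, it follows that $m_o\mathcal{E}_{m_o}(a)\geq a$; applying the positive map $E$ yields $m_o E(\mathcal{E}_{m_o}(a))\geq E(a)$. As $\mathcal{E}_{m_o}(a)\in C(X_o\times\bt)^{\b_{m_o}}$, the rigidity fact lets me replace $E(\mathcal{E}_{m_o}(a))$ by $E_{\frac{1}{k_o}\idd}(\mathcal{E}_{m_o}(a))$, which \eqref{absorption} identifies with $E_{\frac{1}{k_o}\idd}(a)$; hence $E(a)\leq m_o E_{\frac{1}{k_o}\idd}(a)$. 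The main obstacle is the rigidity fact — extracting the $E$-independent value on $C(X_o\times\bt)^{\b_{m_o}}$ via the Furstenberg-type Ces\`{a}ro argument over $\th_o$; \eqref{absorption}, the positivity estimate and the final assembly are routine.
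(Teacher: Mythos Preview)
Your proof is correct and follows essentially the same approach as the paper: the verification of \eqref{absorption} on generators is identical, and your ``rigidity fact'' --- that every $\Phi_{\theta_o,f}$-invariant conditional expectation agrees with $E_{\frac{1}{k_o}\idd}$ on $C(X_o\times\bt)^{\b_{m_o}}$ via the Ces\`aro argument using unique ergodicity of $\theta_o$ --- is exactly the paper's computation that $E\circ\mathcal{E}_{m_o}=E_{\frac{1}{k_o}\idd}$, after which both you and the paper extract \eqref{PP} from $m_o\mathcal{E}_{m_o}=\sum_{l}\b_{m_o}^l\geq\id$. The only cosmetic difference is that you phrase the last step as $m_o\mathcal{E}_{m_o}(a)\geq a$ followed by applying $E$, whereas the paper writes $E_{\frac{1}{k_o}\idd}=\tfrac{1}{m_o}E+\tfrac{1}{m_o}\sum_{l\geq1}E\circ\b_{m_o}^l$ and drops the nonnegative tail.
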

\begin{proof}
For the first part of the statement it is enough to show
$$
E_{\frac{1}{k_o}\idd}\circ \mathcal{E}_{m_o}(h\cdot \chi_l)=E_{\frac{1}{k_o}\idd}(h\cdot \chi_l)
$$
for $l\in\mathbb{Z}$ and  $h\in C(X_o)$, where $\chi_l(z):=z^l$, $z\in\bt$. 

Note that, if $l$ is not a multiple of $m_o$, then by the definition of $\mathcal{E}_{m_o}$, the l.h.s. is zero. The same can be said for the r.h.s. since for $l$ not a multiple of $n_o$, by the definition of the map $T$, one has $T(h \cdot \chi_l)=0$ and thus $E_{\frac{1}{k_o}\idd}(h\cdot \chi_l)=0$. 

We now handle the case when $l$ is a multiple of $n_o$ but not of $m_o$.
From the equality $T(h\cdot \chi_l)=\omega_o(h\cdot \bar u_{l}) [u_l \chi_l]_\mu$, we see that 
$$
E_{\frac{1}{k_o}\idd}(h\cdot \chi_l)=\s\circ F_{\frac{1}{k_o}\idd}\circ\r_1\circ T(h\cdot \chi_l)=\omega_o(h\cdot \bar u_{l})\,\s\circ F_{\frac{1}{k_o}\idd}\circ\r_1 ([u_l \chi_l]_\mu)\,.
$$
But then, $E_{\frac{1}{k_o}\idd}(h\cdot \chi_l)=0$ because $F_{\frac{1}{k_o}\idd}\circ\r_1 ([u_l \chi_l]_\mu)=
F_{\frac{1}{k_o}\idd}(\chi_{\frac{l}{n_o}})=0$.\\

Finally, if $l$ is a multiple of $m_0$, then $\mathcal{E}_{m_0}(h\chi_{l})=h\chi_l$
for any $h\in C(X_o)$, and the sought equality follows.

For the second statement, first note that for every invariant conditional expectation $E$ from $C(X_o\times\bt)$ onto the fixed-point subalgebra $C(X_o\times\bt)^{\Phi_f}$, one has
$$
E_{\frac{1}{k_o}\idd}=E_{\frac{1}{k_o}\idd}\circ \mathcal{E}_{m_o}=E\circ \mathcal{E}_{m_o}\,.
$$

Indeed, by density, linearity and what we saw above, it is enough to verify the equality
only on functions $h$ of the form $h(x, z)=\ell(x)z^{lm_o}$,
where $\ell\in C(X_o)$ and $l\in\bz$. 

Recalling the definition of the functions $a_n$ as $a_{n}(x, z):=\big(u_{n_0}(x)z^{n_o}\big)^n$,
one has
\begin{align*}
\left(E\circ \mathcal{E}_{m_o}(h)\right)(x, z)=&(E(h))(x, z)=\left(E(\ell\overline{u_{lk_o}}a_{lk_o})\right)(x, z)\\
=&\bigg(\frac{1}{n}\sum_{j=0}^{n-1}E(h\circ\Phi_{\theta_o, f}^j)\bigg)(x, z)\\
=&a_{lk_o}(x, z)
E\bigg(\frac{1}{n}\sum_{j=0}^{n-1} \ell(\theta^j(x))\overline{u_{lm_o}(\theta^j(x))}\bigg)\\
=&a_{lk_o}(x, z)\om_o(\ell\overline{u_{lm_o}}),
\end{align*}
where the last equality has been obtained by exploting the unique ergodicity of $\theta_o$, whose unique invariant state is
$\om_o$.

Now, from the equality $\mathcal{E}_{m_o}=\frac{1}{m_o}\sum_{l=0}^{m_o-1} \beta_{m_o}^l$, we see  that  
$$
E_{\frac{1}{k_o}\idd}= \frac{1}{m_o}E+\frac{1}{m_o}\sum_{l=1}^{m_o-1}E\circ\beta_{m_o}^l\,,
$$ 
hence
$E_{\frac{1}{k_o}\idd}(a)\geq  \frac{1}{m_o}E(a)$ for all positive functions $a$ in $C(X_o\times \bt)$, as stated.
\end{proof}
Consider the convex set $\ck:=\big\{E\mid  E\,\textrm{is a}\,\Phi_{\th_o,f}$-invariant conditional expectation of $C(X_o\times\bt)$ onto $C(X_o\times\bt)^{\Phi_{\th_o,f}}\big\}$. In the topologically ergodic situation ({\it i.e.} $C(X_o\times\bt)^{\Phi_{\th_o,f}}=\bc\idd$), any such  conditional expectation is associated with a $\Phi_{\th_o,f}$-invariant state $\f$: $E\equiv E_\f:=\f(\,{\bf\cdot}\,)\idd$.

It was proved in \cite{Fu} (see also \cite{DFGR, DFR} for the noncommutative cases) that\\

\noindent
$\big(X_o\times\bt, \Phi_{\th_o,f}\big)$ topologically ergodic $\&$ $E_{\f_\m}$ extremal $\Rightarrow$ $\big(X_o\times\bt, \Phi_{\th_o,f}\big)$ uniquely ergodic, or equivalently $\ck$ is a singleton.

\bigskip

We want to extend this result to the non-topologically ergodic situation corresponding to $m_o>0$, where $E_{\f_\m}$ is replaced by $E_{\frac{1}{k_o}\idd}$. We set
$$
E_{\rm can}:=\left\{
\begin{array}{ll}
E_{\f_\m}& \text{if}\,\, m_o=0\,, \\
E_{\frac{1}{k_o}\idd} &\text{if}\,\, m_o>0\,. \\
\end{array}
\right.
$$
\begin{thm}
For the Anzai skew-product $C^*$-dynamical system $(X_o\times\bt, \Phi_{\theta_o, f})$, the equivalent conditions ${\rm(i), (ii)}$ in Theorem \ref{abadyk} and ${\rm(iii)}$ in Remark \ref{3eq} are equivalent to
\begin{itemize}
\item[(iv)] $E_{\text{can}}$ is extreme among all $\Phi_{\theta_o, f}$-invariant conditional expectations of $C(X_o\times\bt)$ onto $C(X_o\times\bt)^{\Phi_{\theta_o, f}}$.
\end{itemize}
\end{thm}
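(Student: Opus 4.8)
The plan is to add ${\rm(iv)}$ to the chain of equivalences, so it is enough to prove ${\rm(iii)}\Leftrightarrow{\rm(iv)}$. I would organise things by the trichotomy of Remark \ref{k_o}: in the notation $n_o,k_o,m_o=n_ok_o$ of Proposition \ref{nzmz}, condition ${\rm(iii)}$ is the case ``$n_o=0$, or $n_o>0$ and $k_o=1$'', whereas $\neg{\rm(iii)}$ splits into ``$n_o>0$, $k_o=0$'' (topologically ergodic but not uniquely ergodic) and ``$k_o\geq2$'' ($C(X_o\times\bt)^{\Phi_{\theta_o,f}}$ nontrivial, not uniquely ergodic with respect to it). In the first case I shall show $\ck$ is a singleton; in the second I shall quote Furstenberg; in the third I shall decompose $E_{\rm can}$.

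For ${\rm(iii)}\Rightarrow{\rm(iv)}$: if $n_o=0$ the system is uniquely ergodic, so $C(X_o\times\bt)^{\Phi_{\theta_o,f}}=\bc\idd$, every invariant conditional expectation has the form $\psi(\,{\bf\cdot}\,)\idd$ with $\psi$ an invariant state, and the unique invariant state is $\f_\m$ (since $\m=\m_o\times m$ is the unique invariant measure); thus $\ck=\{E_{\rm can}\}$. If $n_o>0$ and $k_o=1$, Remark \ref{rcfuce} (based on \cite{DFR}, Theorem 10.7 and \cite{AD}, Theorem 3.2) says $\s\circ\r_1\circ T$ is the unique invariant conditional expectation, and because $k_o=1$ makes $F_{\frac{1}{k_o}\idd}=F_1$ the identity of $C(\bt)=\cb_1$, one has $E_{\rm can}=\s\circ\r_1\circ T$ and again $\ck$ is a singleton. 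A one-point convex set has its point as extreme point, giving ${\rm(iv)}$. For $\neg{\rm(iii)}$ with $k_o=0$: here $E_{\rm can}=E_{\f_\m}$, the system is topologically ergodic by Remark \ref{k_o}, but since $n_o>0$ the cohomological equation \eqref{sifo} has a nontrivial solution for some $n\neq0$, so by \cite{Fu} it is not uniquely ergodic; Furstenberg's Lemma 2.1 of \cite{Fu}, read contrapositively, then says $E_{\rm can}=E_{\f_\m}$ is not extreme among the invariant conditional expectations (which in the topologically ergodic case are just the invariant states), i.e.\ $\neg{\rm(iv)}$.

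The real work is the case $k_o\geq2$, where $m_o>0$ and $E_{\rm can}=E_{\frac{1}{k_o}\idd}$. I would pick a self-adjoint traceless matrix $N\in M_{k_o}(\bc)$ with ${\rm tr}_1(N)\neq0$, for instance $N=e_{1,2}+e_{2,1}$ with the $e_{i,j}$ the canonical matrix units, so that ${\rm tr}(N)=0$ and ${\rm tr}_1(N)=1$; then for $0<\eps<(k_o\|N\|)^{-1}$ the matrices $A_\pm:=\frac{1}{k_o}\idd\pm\eps N$ are positive (they dominate $(\frac{1}{k_o}-\eps\|N\|)\idd$) and have unit trace. By Proposition \ref{manycond}, $E_{A_+}$ and $E_{A_-}$ lie in $\ck$, and they are distinct since ${\rm tr}_1(A_+)=\eps\neq-\eps={\rm tr}_1(A_-)$. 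Since $\widetilde{F}_A$ depends linearly on $A$ through \eqref{attm} and the remaining factors of $E_A=\s\circ\pi_{k_o}^{-1}\circ\widetilde{F}_A\circ\pi_{k_o}\circ\r_1\circ T$ do not involve $A$, the map $A\mapsto E_A$ is affine, so $\frac{1}{2}(E_{A_+}+E_{A_-})=E_{\frac{1}{2}(A_++A_-)}=E_{\frac{1}{k_o}\idd}=E_{\rm can}$; thus $E_{\rm can}$ is a proper convex combination of distinct points of $\ck$ and hence not extreme, giving $\neg{\rm(iv)}$. The main obstacle I foresee is that the matrix machinery $(M_{k_o},\cb_{k_o},\widetilde{F}_A)$ degenerates at $k_o=0$, which forces the topologically ergodic case to be handled separately through Furstenberg rather than by one uniform decomposition argument; the remaining points (affinity of $A\mapsto E_A$, positivity of $A_\pm$, the identity $F_1=\id$) are routine consequences of the formulas already in place.
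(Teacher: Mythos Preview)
Your argument is correct, but in the main case $k_o\ge 2$ you take a genuinely different route from the paper. The paper proves $\neg{\rm(ii)}\Rightarrow\neg{\rm(iv)}$ by invoking Proposition \ref{domination}: given \emph{any} invariant conditional expectation $E\neq E_{\frac{1}{k_o}\idd}$, the domination inequality \eqref{PP} guarantees that $F:=\frac{1}{m_o-1}(m_oE_{\frac{1}{k_o}\idd}-E)$ is again an invariant conditional expectation, and then $E_{\frac{1}{k_o}\idd}=\frac{1}{m_o}E+\frac{m_o-1}{m_o}F$ is a proper convex combination. You instead bypass Proposition \ref{domination} entirely and exhibit an explicit decomposition inside the parametrised family $\{E_A\}$ of Proposition \ref{manycond}, using only the affine dependence of $\widetilde F_A$ on $A$ and the $l$-trace criterion of Lemma \ref{caleb} to ensure $E_{A_+}\neq E_{A_-}$. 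Your approach is shorter and more elementary for the purpose of this theorem; the paper's approach, on the other hand, exploits (and thereby illustrates) the structural fact that $E_{\rm can}$ dominates \emph{every} invariant conditional expectation, which is of independent interest. The treatment of the degenerate cases ($n_o=0$, $k_o=1$, and the topologically ergodic case $k_o=0$ via Furstenberg) is essentially the same in both.
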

\begin{proof}
 We only need to focus on the case $m_o>1$, and thus $E_{\rm can}=E_{\frac{1}{k_o}\idd}$. Indeed,
 the assertion is certainly true when $m_o=0$, \emph{i.e.} in the topologically ergodic case, whereas 
 the case $m_o=1$ corresponds to a uniquely ergodic system w.r.t. the fixed-point subalgebra, see \emph{e.g.}
Remark \ref{k_o}.

The implication (ii)$\Rightarrow$(iv) is entirely obvious.  We limit ourselves to the reverse implication. 
 Suppose that $E_{\frac{1}{k_o}\idd}$ is extreme among
all invariant conditional expectations. If there existed an invariant conditional
expectation $E$ different from $E_{\frac{1}{k_o}\idd}$ then, thanks to
Proposition \ref{domination}, we would have $E(a^*a)\leq m_o E_{\frac{1}{k_o}\idd}(a^*a)$, for every $a\in C(X_o\times \bt)$.
But then $F:=\frac{1}{m_o-1}(m_o E_{\frac{1}{k_o}\idd}-E)$ would be an invariant conditional
expectation as well, and $E_{\frac{1}{k_o}\idd}$ could be written as a proper convex combination
$E_{\frac{1}{k_o}\idd}=\frac{1}{m_o}E+\frac{m_o-1}{m_o}F$, which is
a contradiction.
\end{proof}

\section{a simple example}

For the convenience of the reader, we briefly revisit Example 11.3 of \cite{DFR} and compute the conditional expectations described in Proposition \ref{manycond}.

Indeed, let $\bz_\infty$ be the one-point compactification of the integers $\bz$, together with the homeomorphism 
$\th_o:\bz_\infty\rightarrow\bz_\infty$ given by
$$
\th_o(l):=\left\{\begin{array}{ll}
                     l+1 & \!\!\text{if}\,\, l\in \bz\,, \\
                      \infty & \!\!\text{if}\,\, l=\infty\,.
                    \end{array}
                    \right.
$$

The dynamical system $(\bz_\infty,\th)$ is uniquely ergodic (but neither minimal, nor strictly ergodic) with the unique invariant measure
$$
\m_o(f):=f(\infty)\,,\quad f\in C(\bz_\infty)\,.
$$

We also note that 
$$
C(\bz_\infty)^{\th_o}:=\{f\in C(\bz_\infty)\mid f\circ\th=f\}=\bc 1\sim L^\infty(\bz_\infty,\m_o)\,,
$$
with $1$ being the function identically equal to one.

For $f\in C(\bz_\infty;\bt)$, we can associate the process on the torus $\F_{\th_o,f}:\bz_\infty\times\bt\rightarrow\bz_\infty\times\bt$ given by $(\F_{\th_o,f})(l,z)=(\th_o(l), f(l)z)$. We now particularise the situation for 
$$
f(l):=\left\{\begin{array}{ll}
                    -1 & \!\!\text{if}\,\, l=0\,, \\
                      1 & \!\!\text{otherwise}\,.
                    \end{array}
                    \right.
$$

It is immediate to show that for each $n\in\bz$, 
$$
\text{const}f(l)^n=\text{const}\,,\,\,\mu_o\,\text{-}\,\textrm{a.e.}\,,
$$
and therefore the \eqref{sifo} admit nontrivial solutions for each $n\in\bz$.

On the other hand, if the two-sided sequence $(g(l))_{l\in\bz}$ satisfies \eqref{sifo1} for some $n$, then
$$
g(l):=\left\{\begin{array}{ll}
                    g(0)(-1)^{-n} & \!\!\text{if}\,\, l>0\,, \\
                     g(0) & \!\!\text{if}\,\, l\leq0\,,
                    \end{array}
                    \right.
$$
Imposing continuity, we are led to
$$
g(0)=\lim_{l\to-\infty}g(l)=\lim_{l\to+\infty}g(l)=(-1)^{-n}g(0)\,.
$$

Therefore, the \eqref{sifo1} admit nontrivial solutions if $n$ is even, and correspondingly $n_o=1$, $m_o=2=k_o$. The fixed-point subalgebra is linearly generated by elements of the form $g(l,z)=az^{2n}$, $n\in\bz$. 

For $h\in C(\bz_\infty,\bt)$, with 
$$
h(l,z)=\sum_{n\in\bz}\big(h_{2n}(l)z^{2n}+h_{2n+1}(l)z^{2n+1}\big)\,,
$$
we easily obtain
$$
T(h)(z)=h(\infty,z)=\sum_{n\in\bz}\big(h_{2n}(\infty)z^{2n}+h_{2n+1}(\infty)z^{2n+1}\big)\,.
$$

After some computations, for $A\in\bm_2(\bc)$, positive and normalised, we deduce
$$
E_A(h)(l,z)=\sum_{n\in\bz}\big(h_{2n}(\infty)+h_{2n+1}(\infty)(a_{12}+a_{21}z^2)\big)z^{2n}\,,
$$
and thus $E_{\frac12\idd}(h)(l,z)=\sum_{n\in\bz}h_{2n}(\infty)z^{2n}$.

We now sketch some computations in order to verify \eqref{PP} for $\widetilde{E}=E_A$, and for the simplest nontrivial situation $h=\overline{g}g$ where $g(l,z)=g_0(l)+g_1(l)z$ (see also Proposition \ref{wcrf}). For such a purpose, we first note that,
if $A=\begin{pmatrix}
\l& a_{12} \\
\overline{a_{12}} & 1-\l 
\end{pmatrix}$ and $0\leq\l\leq1$, $|a_{12}|\leq a(\l)\leq1/2$, where $a(\l)$ is the greatest value that $|a_{12}|$ can assume.
On the other hand, $E_A(|g|^2)\leq E_{\frac12\idd}(|g|^2)$ leads to $4|g_0(\infty)||g_1(\infty)||a_{12}|\leq|g_0(\infty)|^2+|g_1(\infty)|^2$,
which is automatically satisfied if $|g_0(\infty)|$ or $|g_1(\infty)|$ is $0$. If indeed $|g_0(\infty)||g_1(\infty)|\neq0$,
$|a_{12}|\leq\frac{|g_0(\infty)|^2+|g_1(\infty)|^2}{4|g_0(\infty)||g_1(\infty)|}$
with the r.h.s. always greater than $1/2$, and the assertion follows.

\section*{Acknowledgements}

S.D.V. and F.F. acknowledge the \lq\lq MIUR Excellence Department Project'' awarded to the Department of Mathematics, University of Rome Tor Vergata, CUP E83C18000100006.

\end{document}